\providecommand{\tabularnewline}{\\}
\numberwithin{equation}{section} 
\numberwithin{figure}{section} 
\theoremstyle{plain}
\theoremstyle{plain}
\newtheorem{thm}{Theorem}
  \theoremstyle{definition}
  \newtheorem{defn}[thm]{Definition}
  \theoremstyle{plain}
  \newtheorem{lem}[thm]{Lemma}
  \theoremstyle{plain}
  \newtheorem{cor}[thm]{Corollary}
  \theoremstyle{plain}
  \newtheorem{prop}[thm]{Proposition}
  \theoremstyle{remark}
  \newtheorem{rem}[thm]{Remark}
\begin{document}

\title{Partial Reset in Pulse-Coupled Oscillators}

\author{Christoph Kirst$^{1,2}$ and Marc Timme$^{1,3}$}
\begin{abstract}
Pulse-coupled threshold units serve as paradigmatic models for a wide
range of complex systems. When the state variable of a unit crosses
a threshold, the unit sends a pulse that is received by other units,
thereby mediating the interactions. At the same time, the state variable
of the sending unit is reset. Here we present and analyze a class
of pulse-coupled oscillators where the reset may be partial only and
is mediated by a partial reset function. Such a partial reset characterizes
intrinsic physical or biophysical features of a unit, e.g., resistive
coupling between dendrite and soma of compartmental neurons; at the
same time the description in terms of a partial reset makes possible
a rigorous mathematical investigation of the collective network dynamics.

The partial reset acts as a desynchronization mechanism. For $N$
all-to-all pulse-coupled oscillators an increase in the strength of
the partial reset causes a sequence of desynchronizing bifurcations
from the fully synchronous state via states with large clusters of
synchronized units through states with smaller clusters to complete
asynchrony. By considering inter- and intra-cluster stability we derive
sufficient and necessary conditions for the existence and stability
of cluster states on the partial reset function and on the local dynamics
of the oscillators, as specified by their rise function. This analysis
of invariance and stability  goes beyond monotonicity and curvature
of the rise function and classifies the local dynamics according to
certain expansion and contraction properties that depend also on the
third derivative of the rise function. We obtain analytical bounds
for the stability of cluster states and for a specific class of oscillators
a rigorous derivation of all $N-1$ bifurcation points. Thus the sequence
of bifurcation points is extensive (of the order of the system size
$N$) with the actual number of bifurcating states growing combinatorially
in $N$. We demonstrate that the entire sequence of bifurcations may
occur due to arbitrarily small changes of the reset function. We illustrate
that the transition is robust against structural perturbations and
prevails in presence of heterogeneous network connectivity and rise
functions with mixed curvature.
\end{abstract}
\maketitle

\section{Introduction}

\footnotetext[1]{Max Planck Institute for Dynamics and Self-organization (MPIDS), Bunsenstr. 10, 37073 Göttingen, Germany}
\footnotetext[2]{Bernstein Center for Computational Neuroscience (BCCN) Göttingen}
\footnotetext[3]{Fakultät für Physik, Georg-August-Universität Göttingen, Germany}

Networks of pulse-coupled units serve as paradigmatic models for a
wide range of physical and biological systems as different as cardiac
pacemaker tissue, plate tectonics in earthquakes, chirping crickets,
flashing fireflies and neurons in the brain \cite{Buck:1976,Buck:1988,OlamiFederChristensen:1992,BottaniDelamotte:1997}.
In such systems, units interact by sending and receiving pulses at
discrete times that interrupt the otherwise smooth time evolution.
These pulses may be sound signals, electric and electromagnetic activations
as well as packets of mechanically released stress. Pulses are generated
once the state of a unit crosses a certain threshold value (e.g. the
mechanical stress of a tectonic plate becomes sufficiently large or
the voltage across a nerve cell membrane becomes sufficiently high);
thereafter the state of the sending unit is reset.

Synchronization of oscillators is one of the most prevalent collective
dynamics in pulse-coupled systems \cite{Mirollo:1990,Ernst:1995:1570,Ernst:1998,Bressloff:1997a,BressloffCoombes:2000,HanselMatoMeunier:1995,Gerstner:1993,Gerstner:1996a,TimmeWolf:2008}.
Often not all units are synchronized but form clusters consisting
of synchronized sub-groups of units which in turn are phase-locked
to other clusters \cite{Ernst:1995:1570,TimmeWolfGeisel:2002b,MemmesheimerTimme:2005,Banaji:2002,HanselMatoMeunier:1995,HanselMatoMeunier:1993a,Liu:2001,Pikovsky:2001,Pinsky:1995}.

In neuronal networks synchronization and clustering of pulses constitute
potential mechanisms for effective feature binding. In this paradigm,
different information aspects of the same object represented by activity
of different nerve cells are pooled together by temporal correlations
and in particular due to synchronous firing \cite{Singer:1999a,SingerGray:1995}.
However, strong synchronized firing of nerve cells can also be detrimental:
synchrony is prominent during epileptic seizures \cite{EngelPedley:1997,McCormickContreras:2001}
and observed in the basal ganglia during Parkinson tremor \cite{ElbleKoller:1990}.
Here mechanisms for desynchronizing neural activity are desirable
\cite{Tass:2003,Maistrenko:2004}. 

To study key mechanisms that may underly synchronization, e.g. in
biological neural networks, analytical tractable models of pulse-coupled
oscillator are helpful tools \cite{Peskin:1984,Mirollo:1990,Kuramoto:1991,Abbott:1993,Ernst:1995:1570,Bressloff:1997a,TimmeWolfGeisel:2003a,Denker:2004}.
Here the rise of the state variable of a free oscillatory unit towards
the threshold, the unit's \emph{rise function} characterizes the \emph{sub-threshold
dynamics}. If after reception of a pulse the state variable of the
unit stays below threshold it is said to receive \emph{sub-threshold
input}, whereas excitation above the threshold is \emph{supra-threshold}.\emph{
}Mirollo and Strogatz \cite{Mirollo:1990} showed that biological
oscillators always synchronize their firing in homogeneous networks
with excitatory all-to-all coupling if the rise function has a concave
shape. The synchronization mechanism they find has two parts: (i)
effective decrease of phase differences of units due to sub-threshold
inputs and (ii) instant synchronization due to supra-threshold inputs
and subsequent reset to a fixed value. 

In general,\emph{ }supra-threshold\emph{ }excitation and a subsequent
reset is a dominant mechanism for synchronization of pulse-coupled
oscillators because input pulses that force non-synchronized units
to cross threshold at nearby times are reset to the same value leaving
the units in the same state or in very similar states afterwards.
Although this reset mechanism plays a crucial role in the synchronization
process and the coordination of pulse generation times, its implications
for the collective network dynamics has - to our knowledge - not been
investigated systematically so far: most existing model studies reset
the units with supra-threshold inputs to a fixed value independent
of the strength of supra-threshold excitation \cite{Bottani:1995,Ernst:1995:1570,Kuramoto:1991,SennUrbanczik:2001,TimmeWolfGeisel:2002b,Corral:1995b,Denker:2004,Hopfield:1995b}.
This results in a complete loss of information about the prior state
of the units and makes the dynamics non-invertible. Some other studies
consider the opposite extreme: a complete conservation of supra-threshold
inputs during pulse sending and reset \cite{Hopfield:1995b,Bressloff:1997a}.
Here we aim at closing this gap by presenting and analyzing a model
where the reset (and thus the loss of information about the prior
state and the strength of supra-threshold excitation) can be varied
systematically.

This article is organized as follows: In section \ref{cha:Pulse-Coupled-Oscillators-with-Partial-Reset}
we propose a simple model of pulse-coupled oscillators with \emph{partial
reset}, where the response to supra-threshold inputs can be continuously
tuned and is not an all-or-none effect. To isolate the consequences
of partial reset, we focus on homogeneous systems of all-to-all pulse-coupled
oscillators with convex rise function. In section \ref{cha:Network-Dynamics-with Partial Reset}
we briefly present the results of numerical simulations and find that
the partial reset has a strong influence on the collective network
dynamics: Increasing the strength of the partial reset induces bifurcations
from the fully synchronous state via states with large clusters of
synchronized units through states with smaller clusters to complete
asynchrony. We study this transition rigorously by considering existence
and stability periodic cluster states with respect to inter-cluster
properties in section \ref{sub: AsynchronousClusterStates} and to
intra-cluster properties in \ref{sub:Stability-against-Cluster}.
We derive conditions on the partial reset function and the rise function
to bound regions of existence and stability of cluster states. In
\ref{sub:Solvable Example} we present a rigorous derivation of $N-1$
bifurcation points for a specific class of oscillators. We demonstrate
that the entire sequence of bifurcations may occur for arbitrarily
small changes of the reset function, thus underlining the strong impact
of partial reset on collective network dynamics. In section \ref{sec:Robustness}
we numerically illustrate that the transition is robust against structural
perturbations and prevails in presence of heterogeneous network connectivity
and rise functions with mixed curvature. In section \ref{sec:Discussion}
we discuss our results and relate the simple partial reset model to
biophysically more realistic neuron models .

Specific aspects of the implications of linear partial reset on synchronization
properties for oscillators have been briefly reported before \cite{KirstGeiselTimme:2008}.

\section{\label{cha:Pulse-Coupled-Oscillators-with-Partial-Reset}Networks
of Pulse-Coupled Units with Partial Reset}

We first propose a class of pulse coupled threshold elements with
partial reset and thereafter focus on units that oscillate intrinsically.

\subsection{Absorption Rule and Instant Synchronization}

\begin{figure}
\begin{centering}
\includegraphics[scale=0.8]{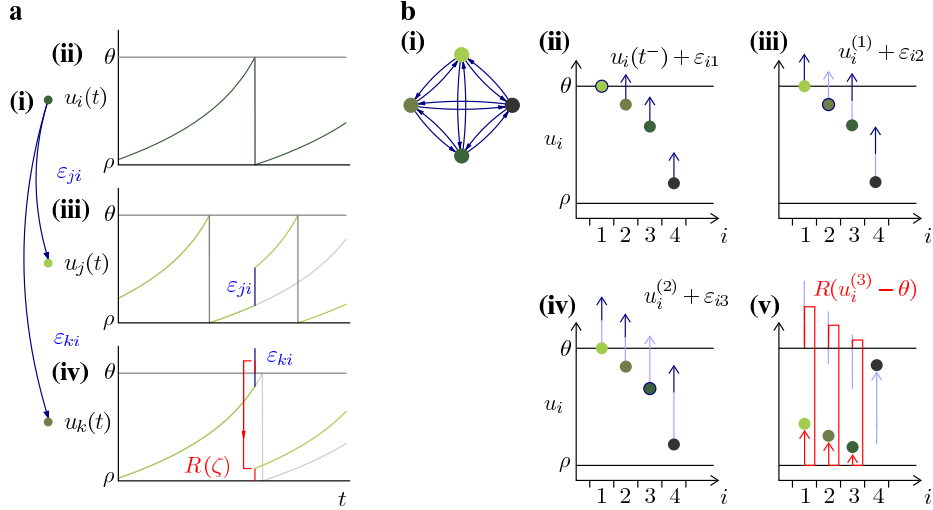} 
\par\end{centering}

\caption{\label{fig:Model-dynamics}Model dynamics. \textbf{(a)} Sample traces
of three units with \textbf{(i)} network connectivity $\varepsilon_{ji}=\varepsilon_{ki}>0$.
\textbf{(ii)} At time $t=t_{1}$ unit $i$ reaches the threshold $\theta$
and its membrane potential is reset to $\rho$. It generates a pulse
which is send to the units $j$ and $k$. \textbf{(iii)} unit $j$
receives the pulse and its membrane potential is increased to $u_{j}(t_{1}^{-})+\varepsilon_{ji}$,
the pulse is \emph{sub-threshold}. \textbf{(iv)} unit $k$ receives
a \emph{supra-threshold} pulse, $u_{k}(t_{1}^{-})+\varepsilon_{kj}\ge\theta$,
its membrane potential is set to $R(\zeta)=R\left(u_{j}(t_{1}^{-})+\varepsilon_{kj}-\theta\right)$
using the \emph{partial reset function} $R$. \textbf{(b)} Sample
avalanche process with $\Theta=\left\{ 1,2,3\right\} $ and $n=3$
in \textbf{(i)} a $N=4$ all-to-all network $\varepsilon_{ij}=(1-\delta_{ij})\varepsilon$.
\textbf{(ii)} unit $i=1$ reaches the threshold $\Theta^{(0)}=\left\{ 1\right\} $
and sends a pulse to the other units (arrows). Their potentials are
updated to $u_{i}^{(1)}=u_{i}(t^{-})+\varepsilon_{i1}$, causing unit
$i=2$ to generate a pulse, $\Theta^{(1)}=\left\{ 2\right\} $. \textbf{(iii)}
The pulse is received by the other units yielding a potential $u_{i}^{(2)}=u_{i}^{(1)}+\varepsilon_{i2}$
which brings unit $i=3$ above threshold $\Theta^{(2)}=\left\{ 3\right\} $.
\textbf{(iv)} The potentials become $u_{i}^{(3)}=u_{i}^{(2)}+\varepsilon_{i3}$
and no further unit crosses the threshold, $\Theta^{(3)}=\emptyset$.
\textbf{(v)} The avalanche stops and units that received supra-threshold
input are reset to $u_{i}(t)=\rho+R\left(u_{i}^{(3)}-\theta\right)$.}

\end{figure}

We consider $N$ threshold elements, which at time $t$ are characterized
by a single real state variable $u_{i}(t)$ with $i\in\{1,2,\dots,N\}$.
In the absence of interactions the state variables evolve freely according
to the differential equation \begin{equation}
\frac{d}{dt}u_{i}=F\left(u_{i}\right)\label{eq: Model free evolution}\end{equation}
with a smooth function $F:\mathbb{R}\rightarrow\mathbb{R}$ specifying
the intrinsic dynamics of the units. The free dynamics are endowed
with an additional nonlinear reset upon reaching a fixed threshold
$\theta$ from below\begin{equation}
u_{i}\left(t^{-}\right)=\theta\quad\Rightarrow\quad u_{i}\left(t\right)=\rho\label{eq: Model reset}\end{equation}
where $\rho<\theta$ is the reset value and we used the notation $u_{i}\left(t^{\pm}\right)=\lim_{s\searrow0}u_{i}\left(t\pm s\right)$.
By an appropriate shift and rescaling of the state variable and its
dynamics we set $\rho=0$ and $\theta=1$ without loss of generality.

The units are $\delta$-pulse coupled. If unit $j$ reaches the threshold
a pulse of strength $\varepsilon_{ij}\ge0$ is send instantaneously
to units $i$ and their membrane potential is increased by an amount
$\varepsilon_{ij}$\begin{equation}
u_{j}\left(t^{-}\right)=\theta\quad\Rightarrow\quad u_{i}^{(1)}=u_{i}\left(t^{-}\right)+\varepsilon_{ij}\label{eq: Model Pulse-Coupling}\end{equation}
If there is no connection from unit $j\rightarrow i$ we set $\varepsilon_{ij}=0$.

If a unit $i$ crosses the threshold due to a pulse from unit $j$\begin{equation}
u_{i}\left(t^{-}\right)+\varepsilon_{ij}\ge\theta\label{eq: Model Supra Threshold Input}\end{equation}
it is said to receive \emph{supra-threshold input}. If a unit receives
supra-threshold input it crosses the threshold from below, sends a
pulse and is reset. Previous models usually reset these units in the
same way as if they reached the threshold without this recurrent input.
In this type of reset, also referred to as the \emph{absorption rule}
(e.g. \cite{Mirollo:1990})

\begin{equation}
u_{i}\left(t\right)\ge\theta\quad\Rightarrow\quad u_{i}\left(t^{+}\right)=\rho\label{eq: Model Absorption}\end{equation}
the total supra.threshold input is lost. As a consequence two or more
units initially in different states $u_{i}$ and simultaneously receiving
supra-threshold inputs will all be reset to the same value $\rho$,
making the absorption rule a strong instant synchronizing element
of the network dynamics. An alternative considered in previous studies
as \cite{Kuramoto:1991,Hopfield:1995b} is \emph{total input conservation},
\begin{equation}
u_{i}\left(t\right)\ge\theta\quad\Rightarrow\quad u_{i}\left(t^{+}\right)=\rho+\left(u_{i}\left(t\right)-\theta\right)\label{eq: Model Total Charge Conservation}\end{equation}
i.e. the total \emph{supra-threshold input charge} $\zeta=u_{i}\left(t\right)-\theta$
is added to the potential $\rho$ after the reset.

\subsection{Partial Reset}

Here we propose a more general model where the reset value is given
by a \emph{partial reset function} $R(\zeta)$ that depends on the
supra-threshold input charge $\zeta=u_{i}(t)-\theta$, \begin{equation}
u_{i}(t)\ge\theta\quad\Rightarrow\quad u_{i}\left(t^{+}\right)=\rho+R\left(u_{i}(t)-\theta\right)\label{eq: Model Partial Reset}\end{equation}

We assume that supra-threshold inputs only lead to excitatory contributions
and thus define: 
\begin{defn}
A function $R:\mathbb{R}\rightarrow\mathbb{R}$ which is monotonically
increasing and satisfies $R\left(0\right)=0$ is called a \emph{partial
reset function}.
\end{defn}
For a \emph{linear partial response} we set\begin{equation}
R_{c}\left(\zeta\right)=c\zeta\label{eq: Model Rc}\end{equation}
with the remaining fraction $0\le c\le1$ of supra-threshold input
charge after the reset. For $c=0$ we recover the absorption rule
\eqref{eq: Model Absorption} while $c=1$ corresponds to total charge
conservation \eqref{eq: Model Total Charge Conservation}. 

Motivation for this extension comes from neural networks. Neurons
consist of functionally different compartments, including the dendrite
and the soma. While synaptic input currents are collected in the dendrite,
the electrical pulses are generated at the soma. Additional charges
not used to excite a spike may stay on the dendrite and contribute
to the membrane potential after reset at the soma. Due to intra-neuronal
interactions a part of this supra-threshold input charge may be lost
due to the reset at the soma:
\begin{defn}
A partial reset function $R$ is said to be \emph{neuronal}, if $0\le R(\zeta)\le\zeta$
for all $\zeta\ge0$. 
\end{defn}

\subsection{The Avalanche Process\label{sub: Model: The-Avalanche-Process}}

Since the interaction is instantaneous, a pulse generated by unit
$j$ may lift other units above threshold simultaneously. These then
generate a pulse on their own, etc. This leads to an avalanche of
pulses (cf. fig. \ref{fig:Model-dynamics}): Units reaching the threshold
at time $t$ due to the free time evolution define the triggering
set \begin{equation}
\Theta^{(0)}=\left\{ j\:|\: u_{j}\left(t^{-}\right)=\theta\right\} \label{eq: Model Avalanche triggering set}\end{equation}
The units $j\in\Theta^{(0)}$ generate spikes which are instantaneously
received by all the connected units $i$ in the network. In response,
their potentials are updated according to \begin{equation}
u_{i}^{(1)}:=u_{i}\left(t^{-}\right)+\sum_{j\in\Theta_{0}}\varepsilon_{ij}\label{eq: ui1}\end{equation}
The initial pulse may trigger certain other units $k\in\Theta^{(1)}=\left\{ k\:|\: u_{k}\left(t^{-}\right)<1\le u_{k}^{(1)}\right\} $
to spike, etc. This process continues $n\le N$ steps until no new
unit crosses the threshold. At each step $m\in\{2,3,\dots,n\}$ the
potentials are updated according to \begin{equation}
u_{i}^{(m+1)}:=u_{i}^{(m)}+\sum_{j\in\Theta_{m}}\varepsilon_{ij}\label{eq: uim}\end{equation}
 where \[
\Theta^{(m)}=\left\{ k\:|\: u_{k}^{(m-1)}<1\le u_{k}^{(m)}\right\} \]
 The potentials immediately after the avalanche $\Theta=\bigcup_{q=0}^{n}\Theta^{(q)}$
of size $a=\left|\Theta\right|$ are obtained via

\begin{equation}
u_{i}\left(t^{+}\right)=\begin{cases}
u_{i}\left(t^{-}\right)+\sum_{j\in\Theta}\varepsilon_{ij} & \, i\notin\Theta\\
\rho+R\left(u_{i}\left(t^{-}\right)+\sum_{j\in\Theta}\varepsilon_{ij}-\theta\right) & \, i\in\Theta\end{cases}\label{eq: Model avlanche partial reset}\end{equation}
 using the partial reset $R$ for units having received supra-threshold
inputs.

In general there is an ambiguity in fixing the precise order of potential
updates and resets during the avalanche. For example an instantaneous
reset could directly follow a single supra-threshold input pulse instead
of resetting the potentials at the end of the avalanche. Motivation
for our choice again comes form neuroscience: It is valid when the
time scale of the action potential (and subsequent reset) is much
faster than the time scale of the synaptic input currents. These in
turn should be much faster than the time scale of the mechanism reducing
the supra-threshold input, e.g. the refractory period (cf. the forthcoming
publication \cite{KirstTimme:2009}). Our model \eqref{eq: Model avlanche partial reset}
then is the limit where all these time scales become small compared
to the time scale of the intrinsic interaction free dynamics. 

For non-zero partial reset functions potential differences of oscillators
involved in a single avalanche will in general not be fully synchronized
after the reset. Thus despite the fact that units are generating pulses
simultaneously they can have different phases afterwards. We therefore
distinguish between \emph{phase synchrony} where units have identical
phases and the weaker condition of \emph{pulse synchrony} which corresponds
to simultaneous firing only but allows differences in the phases.
When examining the system with a higher time resolution phase synchronized
units will stay synchronized whereas pulse synchronized units fire
within a short time interval.

\subsection{Phase Representation of Pulse-Coupled Oscillators with Partial Reset\label{sec: Model Oscillatory-Units}}

In the remainder of this article we will concentrate on units with
strictly positive $F>0$ in \eqref{eq: Model free evolution}. Then
the individual units become oscillatory as the strictly monotonically
increasing trajectory $u_{i}(t)$ of a unit $i$ starting at $u_{i}(0)=0$
reaches the threshold after a time $T$ and is reset to zero again.
By an appropriate rescaling of time we set $T=1$. Defining a phase
like coordinate (cf. \cite{Mirollo:1990}) via\begin{equation}
\phi_{i}\left(t\right)=U^{-1}\left(u_{i}\left(t\right)\right):=\int_{0}^{u_{i}\left(t\right)}\frac{1}{F\left(u\right)}\mathrm{d}u\label{eq: Model Oscillators Transform}\end{equation}
the interaction free dynamics simplify to \begin{equation}
\frac{d}{dt}\phi_{i}(t)=1\label{eq: Model Oscillators dphi/dt eq one}\end{equation}
By definition $U^{-1}$ is strictly monotonically increasing and has
a strictly monotonically increasing inverse $U$. By our choice of
normalization they obey $U^{-1}\left(0\right)=0=U\left(0\right)$
and $U^{-1}\left(1\right)=1=U\left(1\right)$. Note that the function
$U\left(\phi\right)$ captures the intrinsic rise of the membrane
potential towards the threshold.
\begin{defn}
A smooth function $U:\left[0,\infty\right)\rightarrow\left[0,\infty\right)$
is called a \emph{rise function} if it is strictly monotonic increasing
$U'>0$ and is normalized to $U\left(0\right)=0$ and $U\left(1\right)=1$. 
\end{defn}

\begin{defn}
Given a rise function $U$ and a partial reset function $R$ we define
for $\varepsilon\ge0$ the \emph{(sub-threshold)} \emph{interaction
function} $H_{\varepsilon}:\left[0,\infty\right)\rightarrow\left[0,\infty\right)$
by \begin{equation}
H_{\varepsilon}\left(\phi\right):=H\left(\phi,\varepsilon\right):=U^{-1}\left(U\left(\phi\right)+\varepsilon\right)\label{eq: Model interaction function H}\end{equation}
and the \emph{supra-threshold interaction function} $J_{\varepsilon}:\left[U^{-1}\left(\theta-\varepsilon\right),\infty\right)\rightarrow\left[0,\infty\right)$
by \begin{equation}
J_{\varepsilon}\left(\phi\right):=J\left(\phi,\varepsilon\right):=U^{-1}\left(R\left(U\left(\phi\right)+\varepsilon-\theta\right)\right)\label{eq: Model def J}\end{equation}

\end{defn}
We remark that $H_{\varepsilon}^{-1}=H_{-\varepsilon}$. The pulse-coupling
in the potential representation eq. \eqref{eq: Model Pulse-Coupling}
carries over to the phase picture using the interaction function $H$:
\begin{equation}
\phi_{j}\left(t^{-}\right)=1\quad\Rightarrow\quad\phi_{i}\left(t+\tau\right)=H\left(\phi_{i}\left((t+\tau)^{-}\right),\varepsilon_{ij}\right)\label{eq: Model interaction}\end{equation}
Equation \eqref{eq: Model avlanche partial reset} for phases after
an avalanche $\Theta$ at time $t$ becomes\begin{equation}
\phi_{i}\left(t^{+}\right)=\begin{cases}
H\left(\phi_{i}\left(t^{-}\right),\sum_{j\in\Theta}\varepsilon_{ij}\right) & \quad i\notin\Theta\\
J\left(\phi_{i}\left(t^{-}\right),\sum_{j\in\Theta}\varepsilon_{ij}\right) & \quad i\in\Theta\end{cases}\label{eq: Model phase partial reset}\end{equation}

\section{Network Dynamics\label{cha:Network-Dynamics-with Partial Reset}}

To identify the effects of the partial reset on the collective network
dynamics we here first focus on homogeneous networks consisting of
$N$ units with all-to-all coupling and without self-interaction,
i.e.\begin{equation}
\varepsilon_{ij}=(1-\delta_{ij})\varepsilon\label{eq: All-to-All}\end{equation}
$i,j\in\left\{ 1,2,\dots,N\right\} $. We impose the condition $\sum_{j}\varepsilon_{ij}=(N-1)\varepsilon<\theta-\rho=1$
to avoid self-sustained avalanches of infinite size.

The analysis of Mirollo and Strogatz \cite{Mirollo:1990} shows that
in this situation (with a slightly different avalanche process) synchronization
from almost all initial conditions is achieved when the rise function
is concave ($U''<0$) and the absorption rule ($R\equiv0$) is used.
In fact, their result can be generalized to the partial reset model
used here and any partial reset function $R$ that is non-expansive
(e.g. $R'\le1$). For expansive $R$ the synchronized state no longer
has to be the global attractor of the dynamics and typically irregular
dynamics are observed. The proof of synchronization for non-expansive
partial resets and the analysis of the bifurcation to the irregular
dynamics are presented in a forth coming study \cite{KirstTimme:2009}. 

In this article we concentrate on convex rise functions $U$, i.e.\begin{equation}
\frac{d^{2}}{d\phi^{2}}U\left(\phi\right)>0\:.\label{eq: Analyse U convex}\end{equation}
This property holds for large class of conductance based leaky-integrate-and-fire
(LIF) neurons and a class of quadratic-integrate-and-fire (QIF) neurons
(cf. appendix \eqref{sec:Rise-Functions}). Studying convex rise functions
is further motivated by the fact that for these rise functions we
already observe a rich diversity of collective network dynamics with
a strong dependence on the partial reset $R$. However, some of our
results also apply to more general rise functions and in particular
to sigmoidal shapes as often found for neurons \cite{Ermentrout:1996,FourcaudTrocmeHanselVreeswijkBrunel:2003}.

\subsection{\label{sec: Analyse Numerics}Numerical Results: A Sequence of Desynchronizing
Bifurcations }

Systematic numerical investigations indicate a strong dependence of
the network dynamics on the partial reset $R$. In particular, we
find synchronous states, cluster states, asynchronous states and a
sequential desynchronization of clusters when increasing the partial
reset strength, e.g. by increasing the parameter $c$ when using $R=R_{c}$. 

Starting in the synchronized state and then applying a small perturbation
to the phases we observe that the synchronized state is stable for
sufficiently small $c$ (fig. \ref{fig: Desync syncIC} I). When increasing
the partial reset strength the synchronized state becomes unstable
and we observe smaller clusters in the asymptotic network dynamics
(fig. \ref{fig: Desync syncIC} II) where the final cluster state
depends on the precise form of the perturbation. The maximally observed
cluster sizes depend on the value of $c$ (fig. \ref{fig: Desync prob}).
For sufficiently large $c$ only the asynchronous splay state, i.e.
a state with maximal cluster size $a=1$ , is observed (fig. \ref{fig: Desync syncIC}
III). 

Starting from random initial conditions we find that for sufficiently
small $c$ the synchronous state coexists with a variety of cluster
states and the asynchronous state (fig. \ref{fig: Desync randomIC}
I and fig. \ref{fig: Desync prob}). Increasing $c$, the states involving
larger clusters become unstable (fig. \ref{fig: Desync randomIC}
II) until finally all random initial conditions lead to the asynchronous
state (fig. \ref{fig: Desync randomIC} III). 

What is the origin of this rich repertoire of dynamics and which mechanisms
control the observed transition of sequential desynchronizing bifurcations?
To answer these questions, we analytically investigate the existence
and stability of periodic states involving clusters of arbitrary sizes
$a\le N$. The following analysis reveals that the sequence of bifurcations
is controlled by two effects: sub-threshold inputs that are always
synchronizing and supra-threshold inputs that are either synchronizing
or desynchronizing depending on the partial reset strength.

\begin{figure}
\begin{centering}
\includegraphics[scale=0.72]{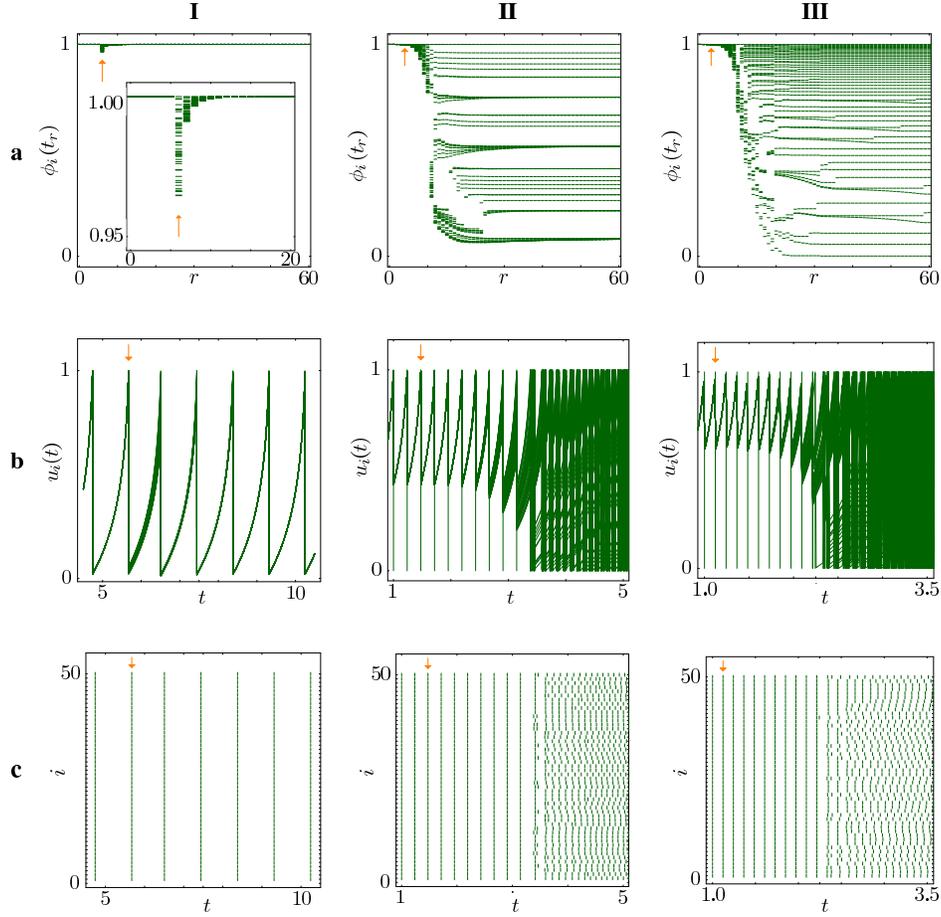} 
\par\end{centering}

\caption{\label{fig: Desync syncIC}Desynchronization transition in a network
with parameters $N=50$, $\varepsilon=0.0175$, $U=U_{b}$, $b=-3$,
$R=R_{c}$ for different values of the partial reset strength: \textbf{(I)}
$c=0.025<c_{\mathrm{cr}}^{(N)}$, \textbf{(II)} $c=0.5\in\left(c_{\mathrm{cr}}^{(N)},c_{\mathrm{cr}}^{(2)}\right)$
and \textbf{(III)} $c=0.7>c_{\mathrm{cr}}^{(2)}$. Plotted are \textbf{(a)}
the phases $\phi_{i}\left(t_{r}\right)$ of all units at pulse generation
times $t_{r}$ of the $r$-th spike of a reference unit (cf. return
map \eqref{eq: Model return map general}), \textbf{(b)} potential
traces of all units and \textbf{(c)} the raster plots marking the
times of pulse generation of each unit $i$. The network is started
in the synchronous state and then a small perturbation is applied
at a time indicated by arrows. }

\end{figure}

\begin{figure}
\begin{centering}
\includegraphics[scale=0.72]{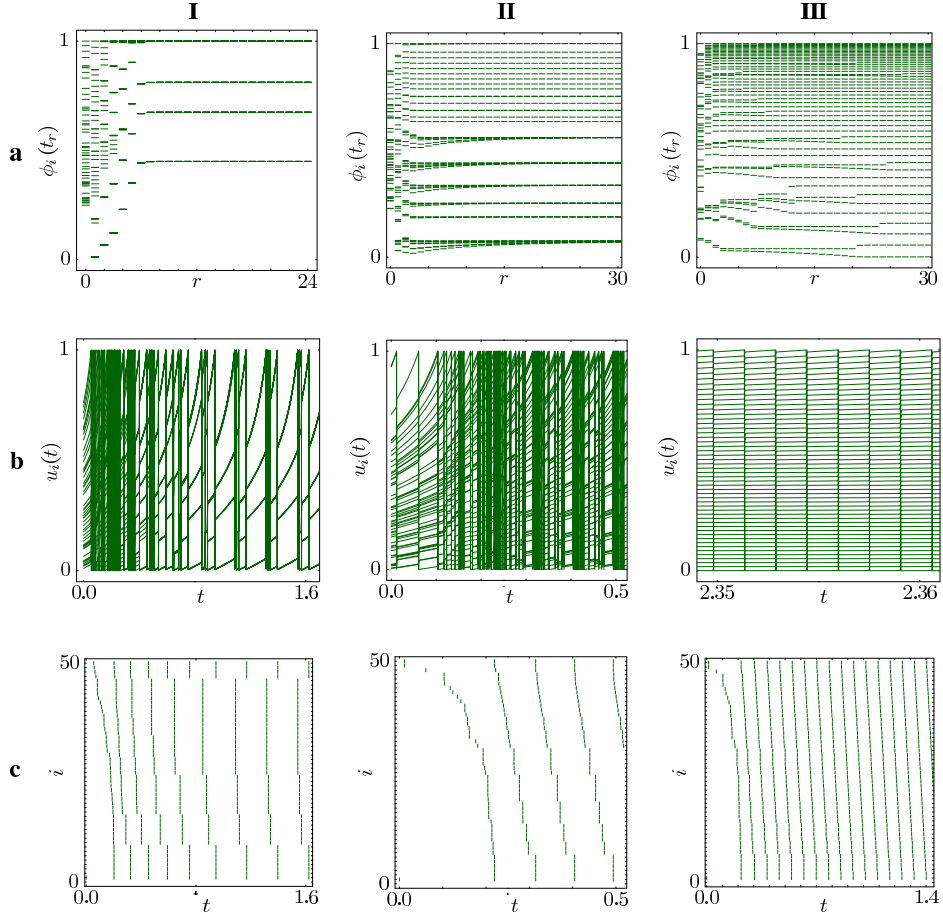} 
\par\end{centering}

\caption{\label{fig: Desync randomIC}Same as in Fig. \ref{fig: Desync syncIC}
but starting the dynamics form random initial phases distributed uniformly
in the interval $[0,1)$. \textbf{(IIIb)} As the transient is similar
to (IIb) a close up of the asymptotic \emph{asynchronous} (splay)
state is shown instead. In (c)\textbf{ }the oscillator are labeled
by increasing initial phases.}

\end{figure}

\begin{figure}
\begin{centering}
\includegraphics{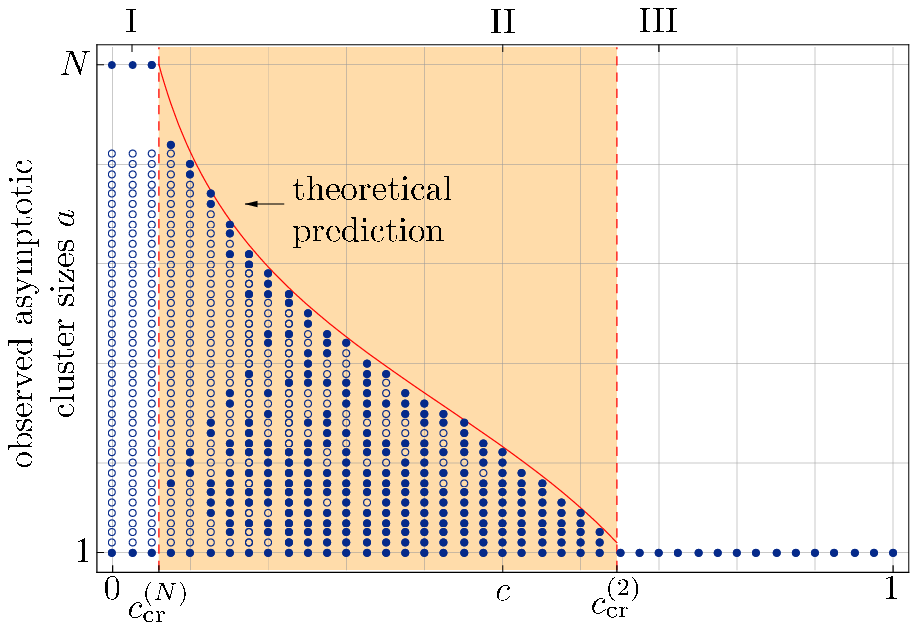} 
\par\end{centering}

\caption{\label{fig: Desync prob}Sequential desynchronization transition in
a network of $N=50$ units ($U=U_{b}$, $b=-3$, $\tilde{\varepsilon}=0.0175$).
Asymptotic cluster sizes $a$ observed in the asymptotic network dynamics
of $500$ simulations for each $c\in\left\{ 0,0.025,\dots,1\right\} $
starting from a perturbed synchronous state ($\bullet$, cf. fig.
\ref{fig: Desync syncIC}) or from other random phases distributed
uniformly in $[0,1)$ ($\circ$ cf. fig. \ref{fig: Desync randomIC}).
The shaded area marks the sequential desynchronizing transition, the
solid line shows the exact theoretical prediction \eqref{eq: Analyse c crit implicit theorem}
continuously interpolating the $N-1$ bifurcation points $c_{\mathrm{cr}}^{(a)}$,
$a\in\left\{ 2,3,\dots,N\right\} $ above which clusters of size $a$
become unstable. The gap for clusters sizes $43<a<50$ at $0<c<c_{\mathrm{cr}}^{(2)}$
appears as cluster states involving these avalanche sizes do not exist
according to lemma \ref{lem: existence asynchronous state}. }

\end{figure}

\subsection{\label{sub:Strategy}Strategy of the Analysis}

We split up our analysis of the dynamics into two parts. First we
assume that all avalanches are invariant, i.e. the given clusters
do not decay into smaller sub-clusters. This assumption allows to
group all oscillators firing in a single avalanche together into a
single {}``meta-oscillator'' with increased firing strength and
an effective self interaction (cf. fig. \ref{cap: Network reduction}).
The analysis of a state $\Phi$ in homogeneous all-to-all network
($\varepsilon_{ij}=(1-\delta_{ij})\varepsilon$) of $N$ oscillators
with avalanche sizes $a_{s}$, $s\in\left\{ 1,2,\dots,m\right\} $,
$\sum_{s}a_{s}=N$ then reduces to analyzing a network of $m$ meta-oscillators
with coupling strengths \begin{equation}
\varepsilon_{ij}=(1-\delta_{ij})\varepsilon_{i}+\delta_{ij}\varepsilon_{ii}\label{eq: effective coupling}\end{equation}
and $\varepsilon_{i}=a_{i}\varepsilon$, $\varepsilon_{ii}=(a_{i}-1)\varepsilon$.

In a second step we derive conditions under which an avalanche of
a certain size will indeed not decay into smaller groups.

\begin{figure}
\begin{centering}
\includegraphics{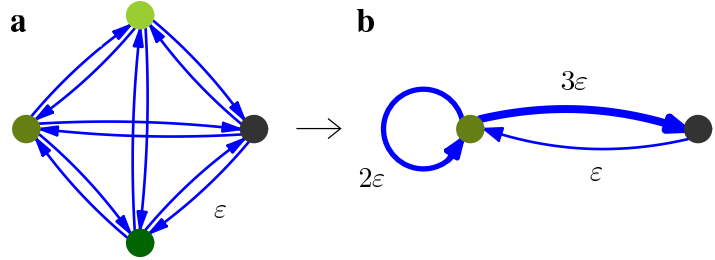}
\par\end{centering}

\caption{\label{cap: Network reduction}Network reduction method. \textbf{(a)}
A network of $N=4$ oscillators producing an avalanche of three units
as in Fig. \ref{fig:Model-dynamics} may be effectively reduced to
\textbf{(b)} a smaller network of size $N=2$ with inhomogeneous coupling
and self-interactions by grouping the oscillators involved in the
avalanche to a single meta-oscillator. The effective network connectivity
then has the form \eqref{eq: effective coupling}.}

\end{figure}

\subsection{Notations: State Space, Firing and Return Map}

A state of a network of $N$ pulse-coupled oscillators is completely
specified by a phase vector \begin{equation}
\Phi=\left(\phi_{1},\phi_{2},\dots,\phi_{N}\right)\in S^{N}=\underbrace{S^{1}\times\dots\times S^{1}}_{N\:\text{times}}\label{eq: Model Phase vector}\end{equation}
where $\phi_{i}\in S^{1}=\mathbb{R}/\mathbb{Z}$ are the phases of
the individual units. Since the time evolution in between avalanches
is a pure phase shift \eqref{eq: Model Oscillators dphi/dt eq one}
it is convenient to consider a Poincare section $\mathcal{S}$ of
$S^{N}$ with states just before the firing of one or more oscillators,
i.e.\begin{equation}
\mathcal{S}=\left\{ \Phi\in S^{N}\:|\:\exists j\in\left\{ 1,\dots,N\right\} ,\:\phi_{j}=1\right\} \:.\label{eq: Model state space}\end{equation}
It is convenient to relabel the oscillators after each avalanche such
that $1=\phi_{1}\ge\phi_{2}\ge\dots\phi_{N-1}\ge\phi_{N}>0$. To specify
the state of the network completely the permutation $\pi^{-1}$ used
for relabeling of the oscillators is remembered. The largest phase
$\phi_{1}=1$ thus belongs to the oscillator $i=\pi(1)$, the second
largest $\phi_{2}$ to $i=\pi(2)$, etc., and the smallest $\phi_{N}$
to oscillator $i=\pi(N)$. Thus an equivalent description of the state
space $\mathcal{S}$ is given by \begin{equation}
\mathcal{S}^{p}=\left\{ \left(\left(\phi_{2},\dots,\phi_{N}\right),\pi\right)\in S^{N-1}\times S_{N}\:|\:1\ge\phi_{2}\ge\dots\phi_{N-1}\ge\phi_{N}\ge0\right\} \label{eq: Model state space permutation}\end{equation}
Here $S_{N}$ is the group of all permutations of $N$ elements. We
use the convention that all index labels $i$ are taken modulo the
network size $N$, e.g. labels $i=j$ and $i=N+j$ denote the same
oscillator. 

The Poincare map of the network dynamics for the Poincare section
$\mathcal{S}$ is the \emph{firing-map} $\mathbf{K}$ that maps the
state $\Phi\in\mathcal{S}$ of the network just before the $s$-th
firing time $t_{s}$ of an avalanche to the state just before the
next avalanche that occurs at time $t_{s+1}$:\begin{equation}
\mathbf{K}\left(\Phi\left(t_{s}^{-}\right)\right)=\Phi\left(t_{s+1}^{-}\right)\in\mathcal{S}\label{eq: Model firing map general}\end{equation}
Having determined the next avalanche $\Theta$ from a state $\Phi\in\mathcal{S}$,
the map $\mathbf{K}$ is a composition of the avalanche map \eqref{eq: Model phase partial reset}
and a subsequent shift of all phases to a state in $\mathcal{S}$.
Note that the firing map is fully determined by the pair $\left(\Theta,\sigma\right)$
which is a function of $\Phi$. We denote a phase shift of size $\sigma$
by

\begin{equation}
S\left(\phi,\sigma\right):=S_{\sigma}\left(\phi\right):=\phi+\sigma\label{eq: Model def S}\end{equation}
The equivalent firing map acting on the state space $\mathcal{S}^{p}$
is denoted by $\mathbf{K}^{p}$. For the phase part we write \[
\mathbf{K}_{\Phi}^{p}\left(\left(\psi_{2}^{(0)},\dots,\psi_{N}^{(0)}\right),\pi^{(0)}\right)=\left(\psi_{2}^{(1)},\dots,\psi_{N}^{(1)}\right)\,.\]
To track the network dynamics we consider a mapping of the state just
before a fixed reference oscillator $k$ fires in an avalanche at
time $t_{r}$ to the state just before this oscillator fires again
at $t_{r+1}$: \begin{equation}
\mathbf{M}\left(\Phi\left(t_{r}^{-}\right)\right)=\Phi\left(t_{r+1}^{-}\right)\label{eq: Model return map general}\end{equation}
$\mathbf{M}$ is called the \emph{return-map} and is the Poincaré
map of the system on the section $\left\{ \Phi\in\mathcal{S}\,|\,\phi_{k}=1\right\} $.
Again the equivalent return map acting on $\mathcal{S}^{p}$ is denoted
by $\mathbf{M}^{p}$. The number $m$ of avalanches occurring in the
application of the return map is a function of the initial phase vector
$\Phi=\Phi\left(t_{r}^{-}\right)$ and the return map $\mathbf{M}$
then is a composition of $m$ firing maps $\mathbf{K}$. Thus $\mathbf{M}$
is completely specified by an ordered \emph{firing sequence} \begin{equation}
\mathcal{F}=\mathcal{F}\left(\Phi\right)=\left\{ \left(\Theta_{s},\sigma_{s}\right)\right\} _{s=1}^{m}\label{eq: Model F firing sequence}\end{equation}
where the pairs $\left(\Theta_{s},\sigma_{s}\right)$ specify the
avalanche set $\Theta_{s}$ and subsequent shift $\sigma_{s}$ of
the $s$-th firing map. 

Given a firing sequence \eqref{eq: Model F firing sequence}, we set
$a_{s}=\left|\Theta_{s}\right|$ and in the case of homogenous networks
with coupling \eqref{eq: All-to-All}, $\varepsilon_{s}=a_{s}\varepsilon$.
A composition of shift and interaction maps is denoted as\begin{equation}
\bigodot_{s=1}^{m}\left(S_{\sigma_{s}}\circ H_{\varepsilon_{s}}\right)\left(\phi\right):=S_{\sigma_{m}}\circ H_{\varepsilon_{m}}\circ S_{\sigma_{m-1}}\circ H_{\varepsilon_{m-1}}...\circ S_{\sigma_{2}}\circ H_{\varepsilon_{2}}\circ S_{\sigma_{1}}\circ H_{\varepsilon_{1}}\left(\phi\right)\label{eq: Model composition def}\end{equation}

\subsection{\label{sub: AsynchronousClusterStates}Existence and Stability of
Asynchronous Periodic States in Meta-Oscillator Networks}
\begin{defn}
An \emph{asynchronous periodic state} of a network of $N$ pulse-coupled
oscillators is a state $\Phi\in\mathcal{S}$ which is invariant under
the return map, i.e. $\mathbf{M}\left(\Phi\right)=\Phi$, and with
avalanche sizes $a_{s}=1$, $s\in\left\{ 1,2,\dots,N\right\} $, i.e.
each oscillator generates a pulse separately and does not receive
supra-threshold input. 
\end{defn}
Initially assume that all clusters stay forward invariant, i.e. do
not decay in to smaller sub-clusters during the network dynamics (cf.
sec. \ref{sub:Strategy}) and thus consider networks of meta-oscillators
with effective coupling matrix \eqref{eq: effective coupling}. A
periodic cluster state in the original model thus becomes an periodic
asynchronous state in the reduced effective meta-network. In the following
we derive conditions for the existence of the asynchronous state and
its stability in a meta-network. 
\begin{lem}
Consider a network \eqref{eq: Model interaction}-\eqref{eq: Model phase partial reset}
of $N$ oscillators with pulse coupling matrix \eqref{eq: effective coupling}
and neuronal partial reset. Let $\Sigma=\left(\sigma_{1},\dots,\sigma_{N}\right)\in\mathbb{R}^{N}$
and define $\mathbf{L}:\mathbb{R}^{N}\times S^{1}\rightarrow\mathbb{R}^{N}$
by \[
\mathbf{L}_{i}(\Sigma,\phi):=\bigodot_{s=i+1}^{N+i-1}\left(S_{\sigma_{s}}\circ H_{\varepsilon_{s}}\right)\circ S_{\sigma_{i}}\circ J_{\varepsilon_{ii}}(\phi)\]
for $i\in\left\{ 1,2,\dots,N\right\} $. Then the asynchronous state
exists if and only if there is a solution $\Sigma^{*}\in\mathbb{R}^{N}$
to the equation \begin{equation}
\mathbf{L}(\Sigma,1)=\left(1,1,\dots,1\right)\label{eq: SigmaStar}\end{equation}
that satisfies $\sigma_{r}^{*}>0$ for all $r\in\left\{ 1,2,\dots,N\right\} $.\end{lem}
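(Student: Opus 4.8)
The plan is to read $\mathbf{L}_i(\Sigma,\cdot)$ as the one--period return map for the phase of the $i$-th meta--oscillator under the fixed cyclic firing order $i\to i+1\to\cdots\to i-1\to i$: reading the composition from the inside out, $J_{\varepsilon_{ii}}$ effects the firing of $i$ together with its supra--threshold self--input, $S_{\sigma_i}$ advances it freely until the next unit fires, and each subsequent block $S_{\sigma_s}\circ H_{\varepsilon_s}$ applies the sub--threshold kick received when unit $s$ fires followed by the free drift to the next firing. Thus $\mathbf{L}_i(\Sigma,1)=1$ says exactly that unit $i$, started at threshold, returns to threshold after one full cycle, and the vector equation \eqref{eq: SigmaStar} is precisely the periodicity condition $\mathbf{M}(\Phi)=\Phi$ written out coordinate by coordinate. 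I would prove the two implications separately.

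For necessity, I would start from an asynchronous periodic state $\Phi^{*}$, which by definition is $\mathbf{M}$--invariant with every avalanche of size one. Relabelling the meta--oscillators by their firing order within one period as $1,\dots,N$, I set $\sigma_s^{*}$ equal to the phase gained (equivalently the time elapsed, since $\dot\phi=1$) between the firing of $s$ and that of $s+1$. Because no two units fire simultaneously (each avalanche has size one) and the period is positive, every $\sigma_s^{*}>0$. Transcribing the trajectory of unit $i$ over one period into maps reproduces the composition defining $\mathbf{L}_i$, and $\mathbf{M}(\Phi^{*})=\Phi^{*}$ then reads $\mathbf{L}_i(\Sigma^{*},1)=1$ for every $i$, so $\Sigma^{*}$ is the required positive solution.

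For sufficiency I would reverse the construction: given $\Sigma^{*}$ solving \eqref{eq: SigmaStar} with all $\sigma_r^{*}>0$, place unit $1$ at phase $1$, generate the remaining phases from the prescribed resets, kicks and shifts, and run the dynamics forward. The equations $\mathbf{L}_i(\Sigma^{*},1)=1$ guarantee that each unit is back at threshold after one period, so the orbit is $\mathbf{M}$--invariant. Positivity is what makes each elementary step a genuine single--unit avalanche: immediately after unit $s$ fires and kicks the others, the designated next firer $s+1$ sits at phase $1-\sigma_s^{*}<1$, hence strictly below threshold, and since $H_{\varepsilon_s}$ is strictly increasing it preserves the phase ordering, so every other non--firing unit lies below $s+1$ and therefore also below threshold. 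The neuronal hypothesis $0\le R(\zeta)\le\zeta$ enters to control the self--reset $J_{\varepsilon_{ii}}(1)=U^{-1}\!\left(R(\varepsilon_{ii})\right)\in[0,1)$, keeping the just--fired unit the lowest, as required for avalanche size one.

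The step I expect to be the main obstacle is the sufficiency half of the positivity claim, namely that $\sigma_r^{*}>0$ really does force the dynamics to realise the assumed cyclic order and prevents any sub--threshold kick from lifting a non--firing unit across threshold (which would merge clusters and violate $a_s=1$). The delicate point is verifying that the unit I have labelled ``next'' is actually the one reaching threshold first among all remaining units, rather than merely the first in my chosen ordering; I expect to settle this by an inductive ordering argument that propagates the strict inequality $1=\phi_1>\phi_2>\cdots>\phi_N>0$ at the reference section through one period, using monotonicity of $H_{\varepsilon}$ to preserve order under kicks together with the positivity of each remaining shift to keep the non--firing units strictly sub--threshold.
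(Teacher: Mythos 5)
Your proposal is correct and follows essentially the same route as the paper's proof: construct the candidate phases from the solution $\Sigma^{*}$ (or read off the shifts from the periodic orbit for the converse), and use the positivity of the $\sigma_r^{*}$ together with monotonicity of $H_{\varepsilon}$ and the neuronal bound $R(\varepsilon_{ii})\le\varepsilon_{ii}$ to propagate the strict ordering $1=\phi_1>\dots>\phi_N>0$ and verify that every avalanche has size one. The ``main obstacle'' you flag is resolved in the paper exactly as you sketch, via the potential ordering $u_1^{(1)}<u_N^{(1)}<\dots<u_2^{(1)}$ and $u_2^{(1)}=U(1-\sigma_1^{*})<1$.
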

\begin{proof}
Assume there is a solution $\Sigma^{*}$, $\sigma_{i}^{*}>0$. Set
\[
\phi_{1}^{*}=1,\quad\quad\phi_{i}^{*}=\left(H_{\varepsilon_{1}}^{-1}\circ S_{\sigma_{1}^{*}}^{-1}\right)\circ\left(H_{\varepsilon_{2}}^{-1}\circ S_{\sigma_{2}^{*}}^{-1}\right)\circ\dots\circ\left(H_{\varepsilon_{i-1}}^{-1}\circ S_{\sigma_{i-1}^{*}}^{-1}\right)(1)\]
for $i\in\left\{ 2\dots,N\right\} $. Using that $\Sigma^{*}$ is
a solution to \eqref{eq: SigmaStar} we have $\bigodot_{r=1}^{N-1}\left(S_{\sigma_{r}^{*}}\circ H_{\varepsilon_{r}}\right)\circ S_{\sigma_{N}^{*}}\circ J_{\varepsilon_{NN}}(1)=1$
and $\phi_{N}^{*}=S_{\sigma_{N}^{*}}\circ J_{\varepsilon_{NN}}(1)>0$
since $\sigma_{N}^{*}>0$. Further using $\varepsilon_{i}>0$ and
$\sigma_{r}^{*}>0$ the phases are ordered according to $\phi_{1}^{*}=1>\phi_{2}^{*}>\dots>\phi_{N}^{*}>0$
and $\Phi^{*}=\left(\phi_{1}^{*},\phi_{2}^{*},\dots,\phi_{N}^{*}\right)\in\mathcal{S}$. 

Starting from the state $\Phi^{*}$ the first pulse of oscillator
$i=1$ results in potentials $u_{1}^{(1)}=R\left(\varepsilon_{11}\right)$,
$u_{i}^{(1)}=U\left(\phi_{i}^{*}\right)+\varepsilon_{1}$, $i\in\left\{ 2,3,\dots,N\right\} $.
Since $R\left(\varepsilon_{ii}\right)\le\varepsilon_{ii}\le\varepsilon_{i}<\varepsilon_{i}+U(\phi)$
for all $\phi>0$ and $H_{\varepsilon_{i}}(\phi)<H_{\varepsilon_{i}}(\psi)$
for $\phi<\psi$ we have $u_{1}^{(1)}<u_{N}^{(1)}<u_{N-1}^{(1)}<\dots<u_{2}^{(1)}$.
Further \[
u_{2}^{(1)}=U\left(\phi_{2}^{*}\right)+\varepsilon_{1}=U\left(\left(H_{\varepsilon_{1}}^{-1}\circ S_{\sigma_{1}^{*}}^{-1}\right)(1)\right)+\varepsilon_{1}=U\left(1-\sigma_{1}^{*}\right)<1\]
as $\sigma_{1}^{*}>0$. Thus oscillator $i=1$ fires without triggering
any further oscillators yielding an avalanche set $\Theta_{1}=$ $\left\{ 1\right\} $.
In addition the oscillators have to be shifted by $\sigma_{1}^{*}$
to obtain $\phi_{2}=1$. Thus the first pair in the firing sequence
is $\left(\left\{ 1\right\} ,\sigma_{1}^{*}\right)$. Applying the
same arguments to the new phases $\Phi^{*(1)}=\mathbf{K}\left(\Phi^{*}\right)$
yields $\left(\left\{ 2\right\} ,\sigma_{2}^{*}\right)$ for the second
pair. Repeating these steps $N$ times one obtains a firing sequence
\[
\mathcal{F}\left(\Phi^{*}\right)=\left\{ \left(\left\{ r\right\} ,\sigma_{r}^{*}\right)\right\} _{r=1}^{N}\]
Thus\begin{eqnarray*}
\mathbf{M}_{i}\left(\Phi^{*}\right) & = & \bigodot_{r=i+1}^{N}\left(S_{\sigma_{r}^{*}}\circ H_{\varepsilon_{r}}\right)\circ S_{\sigma_{i}^{*}}J_{\varepsilon_{ii}}\circ\bigodot_{r=1}^{i-1}\left(S_{\sigma_{r}^{*}}\circ H_{\varepsilon_{r}}\right)\left(\phi_{i}^{*}\right)\\
 & = & \bigodot_{r=i+1}^{N}\left(S_{\sigma_{r}^{*}}\circ H_{\varepsilon_{r}}\right)\circ S_{\sigma_{i}^{*}}J_{\varepsilon_{ii}}(1)\\
 & = & \left(H_{\varepsilon_{1}}^{-1}\circ S_{\sigma_{1}^{*}}^{-1}\right)\circ\left(H_{\varepsilon_{2}}^{-1}\circ S_{\sigma_{2}^{*}}^{-1}\right)\circ\dots\circ\left(H_{\varepsilon_{i-1}}^{-1}\circ S_{\sigma_{i-1}^{*}}^{-1}\right)(1)\\
 & = & \phi_{i}^{*}\end{eqnarray*}
using \eqref{eq: SigmaStar} in the third row. Hence $\mathbf{M}\left(\Phi^{*}\right)=\Phi^{*}$
and the asynchronous state $\Phi^{*}$ is invariant under the return
map.

Conversely an periodic asynchronous state yields a solution to \eqref{eq: SigmaStar},
since by definition no oscillator receives supra-threshold input and
thus there are a phase shifts $\sigma_{i}>0$ after each pulse generation
of oscillators $i\in\left\{ 1,2,\dots,N\right\} $. Invariance of
the periodic asynchronous state then shows that in fact $\Sigma=\left(\sigma_{1},\sigma_{2},\dots,\sigma_{N}\right)$
is a solution to \eqref{eq: SigmaStar}. Hence there is no periodic
asynchronous state if the solution does not exist. If there is a solution
with $\sigma_{i}^{*}\le0$, let $s$ be the smallest index such that
$\sigma_{s}^{*}\le0$. Starting in the state $\Phi^{*}$ the first
firing of oscillator $i=s$ will cause oscillator $i=s+1$ to fire
in the same avalanche since its potential at this point is given by
\begin{eqnarray*}
u_{s+1}^{(1)} & = & U\left[\bigodot_{r=1}^{s-1}\left(S_{\sigma_{r}^{*}}\circ H_{\varepsilon_{r}}\right)\left(\phi_{s+1}^{*}\right)\right]+\varepsilon_{s}\\
 & = & U\left[H_{\varepsilon_{s}}^{-1}\circ S_{\sigma_{s}^{*}}^{-1}\left(1\right)\right]+\varepsilon_{s}=U\left(1-\sigma_{s}^{*}\right)\ge1\end{eqnarray*}
 i.e. $\left\{ r,r+1\right\} \subset\Theta_{r}$ and the system is
not in a periodic asynchronous state. \end{proof}
\begin{cor}
\label{lem: existence asynchronous state}In a network \eqref{eq: Model interaction}-\eqref{eq: Model phase partial reset}
of $N$ oscillators with homogeneous all-to-all coupling matrix \eqref{eq: All-to-All}
an asynchronous (splay) state exists.\begin{proof}
Let\[
L(\sigma):=\bigodot_{s=1}^{N-1}\left(S_{\sigma}\circ H_{\varepsilon}\right)\circ S_{\sigma}\circ J_{0}(1)\]
Now since $L(0)=U^{-1}\left(\varepsilon(N-1)\right)<1$ and $\frac{\partial}{\partial\sigma}L(\sigma)\ge1$
the intermediate value theorem ensures the existence of a $\sigma^{*}>0$
satisfying $L\left(\sigma^{*}\right)=1$. $\Sigma^{*}=\left(\sigma^{*},\dots,\sigma^{*}\right)$
is a solution to \eqref{eq: SigmaStar}. This result is independent
of the partial reset $R$ as $\varepsilon_{ii}=0$ and no oscillator
receives supra-threshold input in the asynchronous state.
\end{proof}
\emph{In Fig. \ref{fig: Desync prob} we observe no cluster states
involving avalanches of size 43 to 49. This is precisely because \ref{eq: SigmaStar}
has no solutions when setting $\varepsilon_{i}=a_{i}\varepsilon$,
$\varepsilon_{ii}=(a_{i}-1)\varepsilon$ for $a_{1}\in\left\{ 43,44,\dots,49\right\} $
and any further $0<a_{i}\in\mathbb{N}$, $i\ge1$ and $m$ such that
$\sum_{s=1}^{m}a_{s}=50$. }

\emph{Note that lemma \ref{lem: existence asynchronous state} holds
for any rise function $U$. If there are $q$ different positive solutions
to \eqref{eq: SigmaStar} there coexist $q$ different periodic asynchronous
states. A convex $U$ ensures that the solution is unique because
$\mathbf{L}\left(\Sigma,1\right)$ then becomes invertible for all
$\Sigma\in\mathbb{R}^{N}$. Another consequence of convexity is that}
\emph{given the existence of an asynchronous state in a meta-oscillator
network it is linearly stable as the following theorem shows:}
\end{cor}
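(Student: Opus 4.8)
The plan is to deduce existence directly from the preceding lemma, which reduces the existence of an asynchronous state to solving the $N$-dimensional equation $\mathbf{L}(\Sigma,1)=(1,\dots,1)$ with all $\sigma_r^*>0$. The key simplification is that the homogeneous all-to-all coupling \eqref{eq: All-to-All} is fully permutation symmetric, and in the splay state every avalanche has size $a_s=1$, so $\varepsilon_s=\varepsilon$ and $\varepsilon_{ss}=(a_s-1)\varepsilon=0$ for every $s$. I would therefore look for a symmetric solution $\Sigma^*=(\sigma^*,\dots,\sigma^*)$, which collapses the vector equation to the single scalar equation $L(\sigma^*)=1$ for the scalar return $L(\sigma)=\bigodot_{s=1}^{N-1}\left(S_\sigma\circ H_\varepsilon\right)\circ S_\sigma\circ J_0(1)$. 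Because each component $\mathbf{L}_i$ is the same cyclic composition of $N-1$ copies of $S_\sigma\circ H_\varepsilon$ followed by $S_\sigma\circ J_0$, the symmetric ansatz solves all $N$ component equations simultaneously.

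It then remains to solve $L(\sigma)=1$ on $\sigma>0$ by the intermediate value theorem. First I would evaluate the left endpoint: since $J_0(1)=U^{-1}(R(0))=0$ and the shifts vanish at $\sigma=0$, iterating $H_\varepsilon$ exactly $N-1$ times from $0$ gives $L(0)=U^{-1}\left((N-1)\varepsilon\right)$, which is strictly below $1$ by the standing coupling bound $(N-1)\varepsilon<\theta-\rho=1$ together with $U^{-1}(1)=1$ and monotonicity of $U^{-1}$. Second I would establish strict monotonicity through a lower slope bound $\frac{\partial}{\partial\sigma}L(\sigma)\ge 1$. Writing $\phi_0=\sigma$ and $\phi_k=H_\varepsilon(\phi_{k-1})+\sigma$ so that $L=\phi_{N-1}$, a one-line induction via the chain rule gives $\frac{d\phi_k}{d\sigma}=H_\varepsilon'(\phi_{k-1})\frac{d\phi_{k-1}}{d\sigma}+1\ge 1$, using only $H_\varepsilon'>0$ (which follows from $U'>0$); the outermost $+1$ from the final shift already forces the slope to be at least $1$. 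Consequently $L$ is continuous and satisfies $L(\sigma)\ge L(0)+\sigma$, so it eventually exceeds $1$, and the intermediate value theorem produces a $\sigma^*>0$ with $L(\sigma^*)=1$.

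Finally I would feed $\Sigma^*=(\sigma^*,\dots,\sigma^*)$ back into the lemma to conclude that the asynchronous (splay) state exists, and record that the construction is independent of the partial reset $R$: because $\varepsilon_{ss}=0$, no oscillator receives supra-threshold input, so $J_0$ enters only through $J_0(1)=U^{-1}(R(0))=0$, which uses merely $R(0)=0$. I do not expect a genuine obstacle here; the only point requiring a little care is verifying that the symmetric ansatz truly decouples the $N$ component equations into a single copy of $L$, i.e.\ that the cyclic index range $s=i+1,\dots,N+i-1$ yields the same composition for each $i$. Convexity of $U$ is not needed for existence (only $U'>0$ is used), though it would additionally guarantee uniqueness of $\sigma^*$, since it renders $\mathbf{L}(\cdot,1)$ invertible.
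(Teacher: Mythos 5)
Your proposal is correct and takes essentially the same route as the paper's own proof: the symmetric ansatz $\Sigma^{*}=\left(\sigma^{*},\dots,\sigma^{*}\right)$ collapses \eqref{eq: SigmaStar} to the scalar equation $L\left(\sigma\right)=1$, which is then solved by the intermediate value theorem using $L(0)=U^{-1}\left((N-1)\varepsilon\right)<1$ and $\frac{\partial}{\partial\sigma}L\left(\sigma\right)\ge1$, with independence of $R$ following from $\varepsilon_{ii}=0$ and $R(0)=0$. You merely make explicit the details the paper leaves implicit, namely the telescoping evaluation of $L(0)$, the chain-rule induction giving the slope bound, and the check that the cyclic index structure of $\mathbf{L}_{i}$ makes the symmetric ansatz solve all $N$ component equations at once.
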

\begin{thm}
\label{thm: Analyse cluster states}Consider a network \eqref{eq: Model interaction}-\eqref{eq: Model phase partial reset}
of $N$ oscillators with pulse coupling matrix \eqref{eq: effective coupling}
and neuronal partial reset. If a periodic asynchronous state exist
it is linearly stable.\begin{proof}
Existence of the asynchronous state $\left(\Phi^{*},\mathrm{id}\right)\in\mathcal{S}^{p}$
with $\Phi^{*}=\left(\phi_{2}^{*},\dots,\phi_{N}^{*}\right)$ implies
invariance under the return map $\mathbf{M}^{p}$, \begin{equation}
\mathbf{M}^{p}\left(\Phi^{*},\mathrm{id}\right)=\left(\Phi^{*},\mathrm{id}\right)\label{eq: Analyse asynchron return map}\end{equation}
 For the intermediate states we set\[
\left(\Phi^{(s)},\mathrm{\pi^{(s)}}\right):=\left(\mathbf{K}^{p}\right)^{s}\left(\Phi^{*},\mathrm{id}\right)\quad s\in\left\{ 0,1,2,\dots,N\right\} \]
 If oscillator $i$ generates a pulse all oscillators $j\ne i$ receive
the same input $\varepsilon_{i}$ and oscillator $i$ receives an
input $\varepsilon_{ii}\le\varepsilon_{i}$. Hence, using $R(\zeta)\le\zeta$,
we find that the oscillators do not change their firing order and
$\pi^{(s)}$ is a cyclic permutation to the left $\pi^{(s)}(i)=i-s$.

We show that the asynchronous state is linearly stable: Adding a perturbation
$\Delta^{(0)}=\left(\delta_{1}^{(0)},...,\delta_{N-1}^{(0)}\right)$
to the asynchronous state such that initially the phases are given
by\[
\Psi^{(0)}:=\left(\phi_{1}^{(0)},...,\phi_{N-1}^{(0)}\right)=\Phi^{*}+\Delta^{(0)}\]
 We take the perturbation to be sufficiently small such that the oscillators
still fire asynchronously, i.e. the avalanches are of size $a_{s}=1$
and the order of the events is preserved. In the following, terms
of order $\mathcal{O}\left(\left(\Delta^{(0)}\right)^{2}\right)$
are neglected which we indicate by a dot above the equality sign ($\doteq$).
After $s$ firing events the phases are\[
\Psi^{(s)}=\mathbf{K}_{\Phi}^{p}\left(\Psi^{(s-1)},\pi^{(s-1)}\right)\doteq\mathbf{K}_{\Phi}^{p}\left(\Phi^{(s-1)},\pi^{(r-1)}\right)+\Delta^{(s)}=\Phi^{(s)}+\Delta^{(s)}\]
 where \[
\Delta^{(s)}=\mathbf{A}^{(s)}\Delta^{(s-1)}\]
 is the phase perturbation before the next firing and $\mathbf{A}^{(s)}$
is the Jacobian matrix of $\mathbf{K}_{\Phi}^{p}$ at $\left(\Phi^{(s-1)},\pi^{(s-1)}\right)$\begin{equation}
A_{ij}^{(s)}=\frac{d\mathbf{K}_{i}^{p}}{d\phi_{j}}\left(\Phi^{(s-1)},\pi^{(s-1)}\right)\,.\label{eq: Analyse asynchron delta1 eq A delta0}\end{equation}
 Setting $\sigma=1-H\left(\psi_{2},\varepsilon_{\pi(1)}\right)$ the
phase part of the firing-map for $N\ge3$ is\begin{equation}
\boldsymbol{K}_{\Phi}^{p}\left(\Psi,\pi\right)=\left(\begin{array}{c}
H\left(\psi_{3},\varepsilon_{\pi(1)}\right)+\sigma\\
H\left(\psi_{4},\varepsilon_{\pi(1)}\right)+\sigma\\
\dots\\
H\left(\psi_{N},\varepsilon_{\pi(1)}\right)+\sigma\\
J\left(1,\varepsilon_{\pi(1)\pi(1)}\right)+\sigma\end{array}\right)^{T}\label{eq: Analyse firing map ansynchron}\end{equation}
 Inserting \eqref{eq: Analyse firing map ansynchron} into \eqref{eq: Analyse asynchron delta1 eq A delta0}
gives\begin{equation}
\mathbf{A}^{(s)}=\left(\begin{array}{ccccc}
-a_{2}^{(s)} & a_{3}^{(s)} & 0 & \dots & 0\\
-a_{2}^{(s)} & 0 & a_{4}^{(s)} & \ddots & \vdots\\
\vdots & \vdots & \ddots & \ddots & 0\\
-a_{2}^{(s)} & 0 & \dots & 0 & a_{N}^{(s)}\\
-a_{2}^{(s)} & 0 & \dots & 0 & 0\end{array}\right)\label{eq: Stability Matrix}\end{equation}
 with \begin{equation}
a_{i}^{(s)}=\frac{d}{d\phi}H_{\varepsilon_{s}}\left(\phi_{i}^{(s-1)}\right)=\frac{U'\left(\phi_{i}^{(s-1)}\right)}{U'\left(H_{\varepsilon_{s}}\left(\phi_{i}^{(s-1)}\right)\right)}\label{eq: aij}\end{equation}
 Since $\varepsilon_{j}>0$ it follows that $H_{\varepsilon_{j}}\left(\phi\right)=U^{-1}\left(U(\phi)+\varepsilon_{j}\right)>\phi$.
Thus $a_{i}^{(s)}<1$ since $U$ is convex. Also \textbf{$U'>0$}
and hence \begin{equation}
0<a_{i}^{(s)}<1\label{eq: Analyse asynchron 0<a<1}\end{equation}
 Now the Eneström-Kakeya theorem (cf. appendix \ref{sec: Appendix EnestromKakeya}
and \cite{Enestroem:1893,Kakeya:1912,AndersonVarga:1972,Horn:1996})
applied to the matrix $\mathbf{A}^{(s)}$ shows that with these properties
the spectral radius $\rho\left(\mathbf{A}^{(s)}\right)$ of $\mathbf{A}^{(s)}$
satisfies \[
\rho\left(\mathbf{A}^{(s)}\right)\le r^{(s)}=\max_{i\in\left\{ 1,...,N-1\right\} }a_{i}^{(s)}<1\]
Thus \begin{equation}
\left\Vert \Delta^{(nN)}\right\Vert =\left\Vert \left(\prod_{r=1}^{N}\mathbf{A}^{(s)}\right)^{n}\Delta^{(0)}\right\Vert \le\prod_{r=1}^{N}\rho\left(\mathbf{A}^{(s)}\right)^{n}\left\Vert \Delta^{(0)}\right\Vert \rightarrow0\quad\quad\textrm{as }n\rightarrow\infty\label{eq: Analyse asynchron delta-> 0}\end{equation}
and the asynchronous state is linearly stable. For $N=2$, $\rho\left(\mathbf{A}^{(s)}\right)=a_{2}<1$.
\end{proof}
\end{thm}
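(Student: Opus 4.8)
The plan is to linearize the return map $\mathbf{M}^{p}$ about the asynchronous fixed point and to show that its Jacobian contracts. By the hypothesis that the asynchronous state exists (together with the preceding lemma), $(\Phi^{*},\mathrm{id})$ is a fixed point of $\mathbf{M}^{p}$, and $\mathbf{M}^{p}$ factors as the $N$-fold composition of the single-firing map $\mathbf{K}^{p}$, one factor per oscillator firing in turn. Before differentiating I would verify that this combinatorial structure persists under small perturbations: when oscillator $i$ fires, every other unit receives the same input $\varepsilon_{i}$ while the firing unit receives $\varepsilon_{ii}\le\varepsilon_{i}$ and is reset through $J$, so the neuronal bound $R(\zeta)\le\zeta$ preserves the firing order and keeps every avalanche a singleton. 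Near the fixed point $\mathbf{M}^{p}$ is therefore smooth with a fixed firing sequence, and after $s$ steps the relabelling permutation is the cyclic shift $\pi^{(s)}(i)=i-s$.

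With the firing order frozen I would track the perturbation $\Delta^{(s)}$ through successive firings by $\Delta^{(s)}=\mathbf{A}^{(s)}\Delta^{(s-1)}$, where $\mathbf{A}^{(s)}$ is the Jacobian of $\mathbf{K}^{p}_{\Phi}$ at the $s$-th intermediate state, so that the return-map Jacobian is the product $P=\mathbf{A}^{(N)}\cdots\mathbf{A}^{(1)}$. Writing $\mathbf{K}^{p}_{\Phi}$ out as a phase shift composed with the interaction maps $H_{\varepsilon_{s}}$ (with the self-interaction $J$ in the final slot) and differentiating produces a companion-type matrix $\mathbf{A}^{(s)}$ whose only nonzero entries are a constant first column $-a_{2}^{(s)}$ together with superdiagonal entries $a_{i}^{(s)}$. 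These entries are the interaction derivatives $a_{i}^{(s)}=\tfrac{d}{d\phi}H_{\varepsilon_{s}}(\phi_{i}^{(s-1)})=U'(\phi_{i}^{(s-1)})/U'(H_{\varepsilon_{s}}(\phi_{i}^{(s-1)}))$. Here convexity is the decisive ingredient: since $\varepsilon_{s}>0$ forces $H_{\varepsilon_{s}}(\phi)>\phi$ and $U''>0$ makes $U'$ increasing, I obtain the uniform bounds $0<a_{i}^{(s)}<1$.

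The analytic core is then to convert these entrywise bounds into a spectral bound. I would apply the Enestr\"{o}m--Kakeya theorem to the characteristic polynomial of $\mathbf{A}^{(s)}$, whose coefficients are built from the $a_{i}^{(s)}$, to conclude $\rho(\mathbf{A}^{(s)})\le\max_{i}a_{i}^{(s)}<1$ for each $s$. The step I expect to be the main obstacle is the final passage from these per-factor bounds to contraction of the full product $P$: the spectral radius is not submultiplicative, so $\rho(P)\le\prod_{s}\rho(\mathbf{A}^{(s)})$ cannot be invoked directly and must be justified with care --- for instance by producing a single weighted maximum norm in which every $\mathbf{A}^{(s)}$ is simultaneously a strict contraction, whence $\|P\|\le\prod_{s}\|\mathbf{A}^{(s)}\|<1$ and therefore $\rho(P)<1$, or by applying a root-location bound to $P$ itself. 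Once $\rho(P)<1$ is in hand, iterating the return map yields $\|\Delta^{(nN)}\|\to0$ and hence linear stability; the degenerate case $N=2$ collapses to the single factor $a_{2}<1$ and is immediate.
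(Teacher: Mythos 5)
Your proposal follows the paper's proof essentially step for step: the same decomposition of the return map into $N$ single-firing maps, the same use of the neuronal bound $R(\zeta)\le\zeta$ to freeze the firing order, the same Jacobian $\mathbf{A}^{(s)}$ with entries $a_{i}^{(s)}=U'\left(\phi_{i}^{(s-1)}\right)/U'\left(H_{\varepsilon_{s}}\left(\phi_{i}^{(s-1)}\right)\right)\in(0,1)$ by convexity, and the same appeal to the Enestr\"om--Kakeya theorem to obtain $\rho\left(\mathbf{A}^{(s)}\right)<1$ for each factor. The one point where you depart from the paper is precisely the point you flag as the main obstacle, and your instinct is correct: the paper's final step asserts $\bigl\Vert\bigl(\prod_{s}\mathbf{A}^{(s)}\bigr)^{n}\Delta^{(0)}\bigr\Vert\le\prod_{s}\rho\left(\mathbf{A}^{(s)}\right)^{n}\bigl\Vert\Delta^{(0)}\bigr\Vert$, which treats the spectral radius as if it were a submultiplicative operator norm. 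Since neither $\Vert M x\Vert\le\rho(M)\Vert x\Vert$ nor $\rho(AB)\le\rho(A)\rho(B)$ holds for general matrices, per-factor spectral-radius bounds do not by themselves yield contraction of the product, and the paper supplies no further argument at this point. Your suggested repair --- a single norm in which every $\mathbf{A}^{(s)}$ is simultaneously a strict contraction --- is the right kind of fix but is not automatic here: the norm implicit in the Enestr\"om--Kakeya corollary comes from a similarity transformation whose diagonal entries depend on the $a_{i}^{(s)}$ and hence on $s$, and the obvious candidates fail (the first column of $\mathbf{A}^{(s)}$ alone has absolute column sum $(N-1)a_{2}^{(s)}$, and the row sums can approach $2$). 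So as written, both your proposal and the paper leave the same step open; completing the argument requires actually exhibiting such a common contraction norm, or bounding the spectral radius of the full product directly by exploiting its special structure, rather than multiplying the per-factor spectral radii.
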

This result is illustrated in Fig. \ref{cap: Analyse Stability-of-the-Asynchronous-State}:
Due to the convexity of the rise function oscillators perturbed to
larger (smaller) phases compared to the asynchronous state are less
(more) advanced by input pulses pulling the perturbed phases back
to the invariant asynchronous dynamics.

\begin{figure}[!t]
\begin{centering}
\includegraphics[scale=0.8]{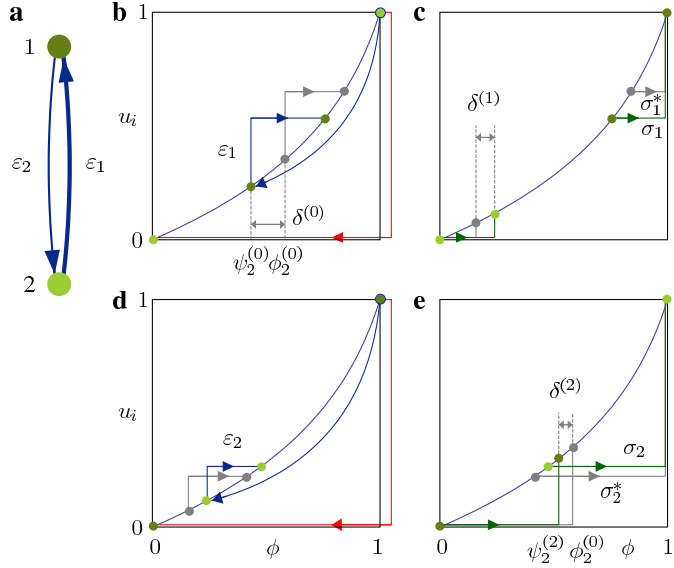} 
\par\end{centering}

\caption{\label{cap: Analyse Stability-of-the-Asynchronous-State}Stability
of the asynchronous state. \textbf{(a)} Graph a network of $N=2$
oscillators with connectivity $\varepsilon_{ij}=\left(1-\delta_{ij}\right)\varepsilon_{j}$
with $0<\varepsilon_{2}<\varepsilon_{1}$. \textbf{(b)} Firing of
oscillator $i=1$. For the oscillator $i=2$ with initial phase $\psi_{2}^{(0)}=\phi_{2}^{(0)}+\delta^{(0)}$
smaller than in the invariant asynchronous state $\phi_{2}^{(0)}$
(gray) the input advances the phase $\psi_{2}^{(0)}$ more in comparison
with the advance of $\phi_{2}^{(0)}$ in the asynchronous state due
to the convexity of the rise function. \textbf{(c)} After the interaction
a subsequent shift completes the firing map $\mathbf{K}$. In total
the derivation from the asynchronous state $\delta^{(1)}$ has become
smaller. \textbf{(d)} Firing of oscillator $i=2$. Phases which are
perturbed to larger values than the asynchronous state are less advanced
by inputs due to convexity of the rise function, i.e. $H_{\varepsilon_{1}}\left(\psi_{2}^{(0)}\right)-\psi_{2}^{(0)}>H_{\varepsilon_{1}}\left(\phi_{2}^{(0)}\right)-\phi_{2}^{(0)}$.
\textbf{(d)} In total the return map $\mathbf{M}$ decreases the phase
perturbations $\left|\delta^{(2)}\right|<\left|\delta^{(0)}\right|$.
These stabilizing dynamics of the asynchronous state due to the convexity
of the rise function generalizes to larger networks as proven in theorem
\ref{thm: Analyse cluster states}.}

\end{figure}

Combining corollary \ref{lem: existence asynchronous state} and theorem
\ref{thm: Analyse cluster states} we obtain: 
\begin{cor}
\label{cor: Analyse Asynchron stable}In a network \eqref{eq: Model interaction}-\eqref{eq: Model phase partial reset}
of $N$ oscillators with homogeneous all-to-all coupling matrix \eqref{eq: All-to-All},
neuronal partial reset $R$ and convex rise function $U$ the periodic
asynchronous (splay) state exists and is linearly stable. 
\end{cor}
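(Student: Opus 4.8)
The plan is to recognize that this corollary is simply the synthesis of the two immediately preceding results, Corollary~\ref{lem: existence asynchronous state} and Theorem~\ref{thm: Analyse cluster states}, specialized to the homogeneous all-to-all network. The key observation is that the coupling \eqref{eq: All-to-All} is exactly the degenerate case of the meta-oscillator coupling \eqref{eq: effective coupling} obtained by taking a single oscillator per cluster, $a_i=1$, so that $\varepsilon_i=\varepsilon$ and $\varepsilon_{ii}=(a_i-1)\varepsilon=0$ for every $i$ and there is no effective self-interaction. Consequently, any statement established for networks of the form \eqref{eq: effective coupling} applies verbatim to the all-to-all case.

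First I would invoke Corollary~\ref{lem: existence asynchronous state} to secure existence: it already asserts, for an arbitrary rise function and independently of $R$, that a splay state exists in the all-to-all network. (The argument there rests on $L(0)=U^{-1}(\varepsilon(N-1))<1$ together with $\tfrac{\partial}{\partial\sigma}L\ge 1$, whence the intermediate value theorem produces a positive $\sigma^{*}$ with $L(\sigma^{*})=1$; the partial reset drops out because $\varepsilon_{ii}=0$ means no oscillator receives supra-threshold input.) Next, with an existing periodic asynchronous state in hand and the coupling identified as an instance of \eqref{eq: effective coupling} with a neuronal $R$, I would apply Theorem~\ref{thm: Analyse cluster states} directly to conclude linear stability.

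The only point requiring genuine care is to confirm that all hypotheses of Theorem~\ref{thm: Analyse cluster states} are met in the all-to-all setting and, in particular, to trace where convexity of $U$ enters. Convexity is what guarantees $H_{\varepsilon_j}(\phi)>\phi$ and hence the bound $0<a_i^{(s)}<1$ in \eqref{eq: Analyse asynchron 0<a<1}, which in turn drives the Enestr\"om--Kakeya estimate $\rho(\mathbf{A}^{(s)})<1$ underlying the theorem's stability conclusion; since the corollary explicitly assumes convex $U$, this hypothesis is available. I do not expect a substantive obstacle here: the entire content is the bookkeeping identification of \eqref{eq: All-to-All} as a special instance of \eqref{eq: effective coupling}, after which existence and stability follow by citation. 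The one remark worth making is that, as noted after Corollary~\ref{lem: existence asynchronous state}, convexity additionally renders $\mathbf{L}(\Sigma,1)$ invertible and the splay state unique, so the object whose stability we assert is unambiguously defined.
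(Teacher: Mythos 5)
Your proposal is correct and follows exactly the paper's route: the corollary is obtained by combining Corollary~\ref{lem: existence asynchronous state} (existence, with \eqref{eq: All-to-All} as the $a_i=1$ special case of \eqref{eq: effective coupling}) with Theorem~\ref{thm: Analyse cluster states} (stability), just as the paper states. Your additional tracing of where convexity enters is consistent with the theorem's proof and adds no divergence.
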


\subsection{\label{sub:Stability-against-Cluster}Impact of Partial Reset on
Intra-Cluster Stability}

In the state of synchronous firing all units in an all-to-all coupled
network receive a supra-threshold input pulse of strength $\left(N-1\right)\varepsilon$
suggesting a rather strong influence of the partial reset $R$ onto
the network dynamics. Indeed, as shown in Fig. \ref{fig: Desync prob}
for the partial reset $R_{c}$ one observes a sequential destabilization
of clusters when increasing the reset strength $c$. In this subsection
we study this behavior analytically and explain the observed transition.
The strategy is to focus on a single cluster and derive general conditions
which ensure the stability of this cluster under the return map. As
the return map depends on the firing sequence $\mathcal{F}$ only
we give lower bounds below which cluster sizes are ensured to be stable
and upper bounds above which cluster sizes are unstable. However,
we find that for a special class of rise functions a full analytical
treatment is possible.
\begin{defn}
A firing sequence $\mathcal{F}$ is \emph{admissible} if there is
a state $\Phi\in\mathcal{S}$ which has firing sequence $\mathcal{F}=\mathcal{F}(\Phi)$.
It is further called \emph{trigger invariant }if for the oscillators
$i\in\Theta_{1}^{(0)}=\left\{ j\in\left\{ 1,2,\dots,N\right\} \,|\,\phi_{j}=1\right\} $
triggering the first avalanche of the state $\Phi=\left(\phi_{1},\dots,\phi_{N}\right)$
(cf. \eqref{eq: Model Avalanche triggering set}) the return map satisfies
$\mathbf{M}_{i}(\Phi)=1$. Thus for a trigger invariant firing sequence
$\mathcal{F}$ with $m$ intermediate avalanches $\Theta_{1}^{(0)}\subset\Theta_{m+1}^{(0)}$.
The set of all  trigger invariant firing sequences is denoted by
$\mathcal{T}$. The set of $\mathcal{F}\in\mathcal{T}$ with initial
avalanche size $a_{1}=\left|\Theta_{1}\right|$ is denoted by $\mathcal{T}_{a_{1}}$.
\end{defn}
Let us focus on a single avalanche of size $a_{1}$ in the network
dynamics. To ensure that all units in this avalanche fire together
again after the return map is applied all units in this avalanche
which were triggered to fire by $a\in\left\{ 1,2,\dots,a_{1}-1\right\} $
preceding spikes i.e. with phases in \[
I_{a}^{T}=\left[U^{-1}\left(1-a\varepsilon\right),1\right]\]
have to be triggered again after applying the return map. Given a
firing sequence $\mathcal{F}=\left\{ \left(\varepsilon_{r},\sigma_{r}\right)\right\} _{r=1}^{m}$
the return map for oscillators $i\in\Theta_{1}$ in the first avalanche
is given by \[
M_{\mathcal{F}}(\phi):=\bigodot_{r=2}^{m}\left(S_{\sigma_{r}}\circ H_{\varepsilon_{r}}\right)\circ S_{\sigma_{1}}\circ J_{\varepsilon_{1}}\left(\phi\right)\]
Hence the conditions\begin{equation}
\mathcal{M}_{\mathcal{F}}\left(I_{a}^{T}\right)\subset I_{a}^{T}\label{eq: conditions}\end{equation}
for all $a\in\left\{ 1,\dots,a_{1}-1\right\} $ and all admissible
firing sequences $\mathcal{F}\in\mathcal{T}_{a_{1}}$ ensure a cluster
of size $a_{1}$ to not split up under return.By finding the most
synchronizing and most desynchronizing firing sequences (i.e. {}``best''
and {}``worst case'' firing sequences) $\mathcal{F}\in\mathcal{T}_{a_{1}}$
these conditions yield upper and lower bounds for stability of a cluster
of size $a_{1}$ under the return map.
\begin{lem}
Consider a network \eqref{eq: Model interaction}-\eqref{eq: Model phase partial reset}
of $N$ oscillators with homogeneous all-to-all coupling matrix \eqref{eq: All-to-All}.

Set \[
w_{a}^{a_{1}}=\inf_{\mathcal{F}\in\mathcal{T}_{a_{1}}}M_{\mathcal{F}}\left(U^{-1}\left(1-a\varepsilon\right)\right)\]

and \[
b_{a}^{a_{1}}=\sup_{\mathcal{F}\in\mathcal{T}_{a_{1}}}M_{\mathcal{F}}\left(U^{-1}\left(1-a\varepsilon\right)\right)\]

Then the conditions \begin{equation}
w_{a}^{a_{1}}\ge U^{-1}\left(1-a\varepsilon\right)\label{eq: worst case cond}\end{equation}

for $a\in\left\{ 1,2,\dots,a_{1}-1\right\} $ are sufficient and \begin{equation}
b_{a}^{a_{1}}\ge U^{-1}\left(1-a\varepsilon\right)\label{eq: best case cond}\end{equation}

are necessary for a cluster of size $a_{1}$ to be invariant under
return.\end{lem}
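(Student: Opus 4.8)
The plan is to reduce the set-inclusion conditions \eqref{eq: conditions}, which the preceding discussion already identifies as governing invariance of a cluster of size $a_1$, to scalar inequalities at the lower endpoints of the nested intervals $I_a^T = [U^{-1}(1-a\varepsilon), 1]$, and then to read off the sufficient and necessary statements by passing to the infimum (the most desynchronizing, or worst-case, firing sequence) and the supremum (the most synchronizing, or best-case, firing sequence) over $\mathcal{F} \in \mathcal{T}_{a_1}$.

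First I would show that for each fixed admissible $\mathcal{F} = \{(\varepsilon_r, \sigma_r)\}_{r=1}^m \in \mathcal{T}_{a_1}$ the map $M_{\mathcal{F}}$ is monotonically increasing. This is immediate from the construction, since each factor is increasing: the shifts $S_{\sigma_r}$ trivially, the sub-threshold map $H_{\varepsilon_r}(\phi) = U^{-1}(U(\phi) + \varepsilon_r)$ because $U$ and $U^{-1}$ are increasing, and the supra-threshold map $J_{\varepsilon_1}(\phi) = U^{-1}(R(U(\phi) + \varepsilon_1 - \theta))$ because $R$ is monotonically increasing by the definition of a partial reset function. Consequently, for every $\phi \in I_a^T$ one has $M_{\mathcal{F}}(U^{-1}(1-a\varepsilon)) \le M_{\mathcal{F}}(\phi) \le M_{\mathcal{F}}(1)$, so the inclusion $\mathcal{M}_{\mathcal{F}}(I_a^T) \subset I_a^T$ holds if and only if the two endpoints are mapped back into $I_a^T$, that is $M_{\mathcal{F}}(U^{-1}(1-a\varepsilon)) \ge U^{-1}(1-a\varepsilon)$ and $M_{\mathcal{F}}(1) \le 1$.

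Next I would dispose of the upper endpoint: trigger invariance of $\mathcal{F}$ means exactly that the units triggering the first avalanche, which sit at phase $\phi = 1$, return to phase $1$, so $M_{\mathcal{F}}(1) = 1$ and the upper inequality is satisfied with equality for every $\mathcal{F} \in \mathcal{T}_{a_1}$. Thus \eqref{eq: conditions} collapses to the single lower-endpoint inequality $M_{\mathcal{F}}(U^{-1}(1-a\varepsilon)) \ge U^{-1}(1-a\varepsilon)$, required for all $a \in \{1,\dots,a_1-1\}$ and all $\mathcal{F} \in \mathcal{T}_{a_1}$. For sufficiency, $w_a^{a_1} \ge U^{-1}(1-a\varepsilon)$ forces this inequality for every admissible $\mathcal{F}$, since $w_a^{a_1}$ is by definition the greatest lower bound of the values $M_{\mathcal{F}}(U^{-1}(1-a\varepsilon))$; \eqref{eq: conditions} then gives invariance. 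For necessity I would observe that if the cluster is invariant, the inclusion holds in particular for the realized firing sequence $\mathcal{F}_0 \in \mathcal{T}_{a_1}$, whence $b_a^{a_1} = \sup_{\mathcal{F}} M_{\mathcal{F}}(U^{-1}(1-a\varepsilon)) \ge M_{\mathcal{F}_0}(U^{-1}(1-a\varepsilon)) \ge U^{-1}(1-a\varepsilon)$.

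I expect the real work to lie not in the inf/sup bookkeeping but in justifying that the lower endpoint is the quantity that actually controls splitting. One must confirm that the unit triggered by exactly $a$ preceding spikes, whose phase sits at the boundary $U^{-1}(1-a\varepsilon)$, is the critical one, so that $\mathcal{M}_{\mathcal{F}}(I_a^T) \subset I_a^T$ faithfully encodes ``the cluster does not break up.'' In particular, for the contrapositive reading of necessity one must check that $b_a^{a_1} < U^{-1}(1-a\varepsilon)$ forces every admissible $\mathcal{F}$ to push this boundary unit strictly below its retriggering threshold, so that the avalanche must fragment irrespective of the configuration of the remaining clusters; together with monotonicity, this is what converts the failure of the scalar supremum inequality into a genuine loss of invariance.
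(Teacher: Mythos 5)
Your proposal is correct and follows essentially the same route as the paper: the paper's proof is the single observation that $\frac{\partial}{\partial\phi}M_{\mathcal{F}}(\phi)>0$, so the inclusion conditions \eqref{eq: conditions} reduce to the lower-endpoint inequalities $M_{\mathcal{F}}\left(U^{-1}\left(1-a\varepsilon\right)\right)\ge U^{-1}\left(1-a\varepsilon\right)$ for all admissible $\mathcal{F}\in\mathcal{T}_{a_{1}}$, after which the inf/sup bookkeeping gives sufficiency and necessity exactly as you describe. Your additional remarks (the upper endpoint being handled by trigger invariance, and the interpretive step linking the endpoint condition to actual cluster splitting) are points the paper delegates to the discussion preceding the lemma rather than to the proof itself.
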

\begin{proof}
$\frac{\partial}{\partial\phi}M_{\mathcal{F}}(\phi)>0$ and thus conditions
\eqref{eq: conditions} are equivalent to $\mathcal{M}_{\mathcal{F}}\left(U^{-1}\left(1-a\varepsilon\right)\right)\ge U^{-1}\left(1-a\varepsilon\right)$
for $a\in\left\{ 1,2,\dots,a_{1}-1\right\} $ and all admissible $\mathcal{F}\in\mathcal{T}_{a_{1}}$.
\end{proof}
Finding the $w_{a}^{a_{1}}$ and $b_{a}^{a_{1}}$ for general $U$
and $R$ can be done numerically using optimization techniques. However,
there are two classes of rise functions which allow further analytical
investigation. Most of the commonly used rise functions, as e.g. the
rise function of the LIF neuron or the conductance based LIF neuron
fall into one of these classes (cf. Appendix \ref{sec:Rise-Functions}).

Two oscillators initially at phases $\phi$ and $\phi+\Delta\phi$
receiving a pulse of strength $\varepsilon$ will have a new phase
difference \begin{equation}
\Delta H\left(\phi,\Delta\phi,\varepsilon\right):=H_{\varepsilon}\left(\phi+\Delta\phi\right)-H_{\varepsilon}\left(\phi\right)\;.\label{eq: Delta H}\end{equation}
We denote the domain of $\Delta H$ as \[
\mathcal{D}:=\left\{ \left(\phi,\Delta\phi,\varepsilon\right)\,|\,0\le\varepsilon\le1,\,0\le\phi\le1,\,0\le\Delta\phi\le U^{-1}\left(1-\varepsilon\right)-\phi\right\} \,.\]

\begin{defn}
\label{def:icpd dcpd}A rise function $U$ is \emph{increasing the
change of phase differences (icpd)} iff\begin{equation}
\frac{\partial}{\partial\phi}\Delta H\left(\phi,\Delta\phi,\varepsilon\right)\ge0\quad\text{for all}\,\left(\phi,\Delta\phi,\varepsilon\right)\in\mathcal{D}\,.\label{eq: Analyse Delta H early contracting}\end{equation}
 Conversely, it is \emph{decreasing the change of phase differences
(dcpd)} iff\begin{equation}
\frac{\partial}{\partial\phi}\Delta H\left(\phi,\Delta\phi,\varepsilon\right)\le0\quad\text{for all}\,\left(\phi,\Delta\phi,\varepsilon\right)\in\mathcal{D}\:.\label{eq: Analyse Delta H late constracting}\end{equation}
As shown in appendix \ref{sub:Icpd-and-Dcpd} the icpd (dcpd) property
is related to the third derivative of $U$. 
\end{defn}
The following lemma allows to bound the change in phase differences
if the rise function is icpd or dcpd: 
\begin{lem}
\label{lem: Analyse e/l contracting commutation relations}Let $\varepsilon_{r}$,
$\sigma_{r}\ge0$, $r\in\left\{ 1,2,\dots,m\right\} $, $\varepsilon=\sum_{r=1}^{m}\varepsilon_{r}$,
$\sigma_{l}\ge0$. Choose a $\sigma_{u}\ge0$ such that\[
\bigodot_{r=1}^{m}\left(S_{\sigma_{r}}\circ H_{\varepsilon_{r}}\right)\left(\phi\right)\le H_{\varepsilon}\circ S_{\sigma_{u}}\left(\phi\right)\,.\]
and let $\psi\le\phi$. Then for an icpd rise function $U$\begin{equation}
S_{\sigma_{l}}\circ H_{\varepsilon}\left(\phi\right)-S_{\sigma_{l}}\circ H_{\varepsilon}\left(\psi\right)\,\le\,\bigodot_{r=1}^{m}\left(S_{\sigma_{r}}\circ H_{\varepsilon_{r}}\right)\left(\phi\right)-\bigodot_{r=1}^{m}\left(S_{\sigma_{r}}\circ H_{\varepsilon_{r}}\right)\left(\psi\right)\,\le\, H_{\varepsilon}\circ S_{\sigma_{u}}\left(\phi\right)-H_{\varepsilon}\circ S_{\sigma_{u}}\left(\psi\right)\label{eq: Analyse less idp}\end{equation}
 If $U$ is dcpd then \begin{equation}
S_{\sigma_{l}}\circ H_{\varepsilon}\left(\phi\right)-S_{\sigma_{l}}\circ H_{\varepsilon}\left(\psi\right)\ge\bigodot_{r=1}^{m}\left(S_{\sigma_{r}}\circ H_{\varepsilon_{r}}\right)\left(\phi\right)-\bigodot_{r=1}^{m}\left(S_{\sigma_{r}}\circ H_{\varepsilon_{r}}\right)\left(\psi\right)\ge H_{\varepsilon}\circ S_{\sigma_{u}}\left(\phi\right)-H_{\varepsilon}\circ S_{\sigma_{u}}\left(\psi\right)\label{eq: Analyse greater dpd}\end{equation}
 \begin{proof}
Consider icpd rise functions first: To show the first inequality of
eq. (\ref{eq: Analyse less idp}) we use induction on $m$. The statement
is clearly true for $m=1$. Assume it is true for $m\ge1$ then \begin{align*}
S_{\sigma_{l}}\circ H_{\varepsilon}\left(\phi\right)-S_{\sigma_{l}}\circ & H_{\varepsilon}\left(\psi\right)=H_{\varepsilon_{m+1}}\circ H_{\varepsilon-\varepsilon_{m+1}}\left(\phi\right)-H_{\varepsilon_{m+1}}\circ H_{\varepsilon-\varepsilon_{m+1}}\left(\psi\right)\\
= & \Delta H\left(H_{\varepsilon-\varepsilon_{m+1}}\left(\psi\right),H_{\varepsilon-\varepsilon_{m+1}}\left(\phi\right)-H_{\varepsilon-\varepsilon_{m+1}}\left(\psi\right),\varepsilon_{m+1}\right)\\
\le & \Delta H\left(H_{\varepsilon-\varepsilon_{m+1}}\left(\psi\right),\bigodot_{r=1}^{m}\left(S_{\sigma_{r}}\circ H_{\varepsilon_{r}}\right)\left(\phi\right)-\bigodot_{r=1}^{m}\left(S_{\sigma_{r}}\circ H_{\varepsilon_{r}}\right)\left(\psi\right),\varepsilon_{m+1}\right)\\
\le & \Delta H\left(\bigodot_{r=1}^{m}\left(S_{\sigma_{r}}\circ H_{\varepsilon_{r}}\right)\left(\psi\right),\bigodot_{r=1}^{m}\left(S_{\sigma_{r}}\circ H_{\varepsilon_{r}}\right)\left(\phi\right)-\bigodot_{r=1}^{m}\left(S_{\sigma_{r}}\circ H_{\varepsilon_{r}}\right)\left(\psi\right),\varepsilon_{m+1}\right)\\
= & \bigodot_{r=1}^{m+1}\left(S_{\sigma_{r}}\circ H_{\varepsilon_{r}}\right)\left(\phi\right)-\bigodot_{r=1}^{m+1}\left(S_{\sigma_{r}}\circ H_{\varepsilon_{r}}\right)\left(\psi\right)\end{align*}
where we used the induction hypothesis and $\frac{\partial}{\partial\Delta\phi}\Delta H>0$
(cf. \eqref{eq: Delta H}) in the third, and in the fourth again the
icpd property and the fact that $H_{\varepsilon-\varepsilon_{m+1}}\left(\psi\right)\le\bigodot_{r=1}^{m}\left(S_{\sigma_{r}}\circ H_{\varepsilon_{r}}\right)$
if $\sum_{r=1}^{m+1}\varepsilon_{r}=\varepsilon$, $\sigma_{i}\ge0$.
Substituting $\le$ with $\ge$ we obtain the result for dcpd rise
functions.

For the second inequality we also use induction over $m$. The statement
is trivially true for $m=1$. Let it be true for $m\ge1$ and let
$\sigma_{u}\ge0$ such that $\bigodot_{r=1}^{m+1}\left(S_{\sigma_{r}}\circ H_{\varepsilon_{r}}\right)\left(\phi\right)\le H_{\varepsilon}\circ S_{\sigma_{u}}\left(\phi\right)$.
Then \begin{align*}
H_{\varepsilon}\circ S_{\sigma_{u}}\left(\phi\right)-H_{\varepsilon} & \circ S_{\sigma_{u}}\left(\psi\right)=H_{\varepsilon_{m+1}}\circ H_{\varepsilon-\varepsilon_{m+1}}\circ S_{\sigma_{u}}\left(\phi\right)-H_{\varepsilon_{m+1}}\circ H_{\varepsilon-\varepsilon_{m+1}}\circ S_{\sigma_{u}}\left(\psi\right)\\
 & =\Delta H\left(H_{\varepsilon-\varepsilon_{m+1}}\circ S_{\sigma_{u}}\left(\psi\right),H_{\varepsilon-\varepsilon_{m+1}}\circ S_{\sigma_{u}}\left(\phi\right)-H_{\varepsilon-\varepsilon_{m+1}}\circ S_{\sigma_{u}}\left(\psi\right),\varepsilon_{m+1}\right)\\
 & \ge\Delta H\left(H_{\varepsilon-\varepsilon_{m+1}}\circ S_{\sigma_{u}}\left(\psi\right),\bigodot_{r=1}^{m}\left(S_{\sigma_{r}}\circ H_{\varepsilon_{r}}\right)\left(\phi\right)-\bigodot_{r=1}^{m}\left(S_{\sigma_{r}}\circ H_{\varepsilon_{r}}\right)\left(\psi\right),\varepsilon_{m+1}\right)\\
 & \ge\Delta H\left(\bigodot_{r=1}^{m}\left(S_{\sigma_{r}}\circ H_{\varepsilon_{r}}\right)\left(\psi\right),\bigodot_{r=1}^{m}\left(S_{\sigma_{r}}\circ H_{\varepsilon_{r}}\right)\left(\phi\right)-\bigodot_{r=1}^{m}\left(S_{\sigma_{r}}\circ H_{\varepsilon_{r}}\right)\left(\psi\right),\varepsilon_{m+1}\right)\\
 & =\bigodot_{r=1}^{m+1}\left(S_{\sigma_{r}}\circ H_{\varepsilon_{r}}\right)\left(\phi\right)-\bigodot_{r=1}^{m+1}\left(S_{\sigma_{r}}\circ H_{\varepsilon_{r}}\right)\left(\psi\right)\end{align*}
 where in the third row we used the implication\begin{equation}
\bigodot_{s=1}^{m+1}\left(S_{\sigma_{s}}\circ H_{\varepsilon_{s}}\right)\left(\phi\right)\le H_{\varepsilon}\circ S_{\sigma_{u}}\left(\phi\right)\,\Rightarrow\,\bigodot_{s=1}^{m}\left(S_{\sigma_{s}}\circ H_{\varepsilon_{s}}\right)\left(\phi\right)\le H_{\varepsilon-\varepsilon_{m+1}}\circ S_{\sigma_{u}}\left(\phi\right)\label{eq: Analyse e-contracting condition greater}\end{equation}
to apply the induction hypothesis. In the fourth row we again used
$\frac{\partial}{\partial\Delta\phi}\Delta H>0$, eq. \eqref{eq: Analyse e-contracting condition greater}
and the icpd property. Substituting $\ge$ with $\le$ we obtain the
result for dcpd rise functions. 
\end{proof}
\end{lem}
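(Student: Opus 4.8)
The plan is to establish both chains of inequalities by induction on the number $m$ of factors in the composition, leaning on two monotonicity properties of the quantity $\Delta H$ from \eqref{eq: Delta H}. The first is that $\Delta H$ is strictly increasing in its second argument, since $\frac{\partial}{\partial\Delta\phi}\Delta H(\phi,\Delta\phi,\varepsilon)=H_\varepsilon'(\phi+\Delta\phi)>0$ because every $H_\varepsilon$ is strictly increasing. The second is that, for an icpd rise function, $\Delta H$ is nondecreasing in its first argument, which is exactly Definition \ref{def:icpd dcpd}. Two elementary identities will carry the bookkeeping: the semigroup law $H_\alpha\circ H_\beta=H_{\alpha+\beta}$, immediate from $H_\varepsilon(\phi)=U^{-1}(U(\phi)+\varepsilon)$, and the observation that an outermost shift $S_\sigma$ is a pure translation and hence cancels from any difference of two images. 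In particular the outer shift $\sigma_l$ on the left-hand side contributes nothing to the difference.

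First I would settle the base case $m=1$: after the outer shift cancels, the middle term and the left bound both equal $H_\varepsilon(\phi)-H_\varepsilon(\psi)$, while the right bound is $H_\varepsilon(\phi+\sigma_u)-H_\varepsilon(\psi+\sigma_u)$, so the claim reduces to $\Delta H(\psi,\phi-\psi,\varepsilon)\le\Delta H(\psi+\sigma_u,\phi-\psi,\varepsilon)$, which is the icpd inequality since $\sigma_u\ge0$.

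For the inductive step I would peel the final interaction $H_{\varepsilon_{m+1}}$ off all three expressions. Writing $\varepsilon'=\varepsilon-\varepsilon_{m+1}$ and using the semigroup law together with the cancellation of the outermost shift, each of the three differences takes the form $\Delta H(\,b\,,\,d\,,\varepsilon_{m+1})$, where the base point $b$ is the image of $\psi$ under the remaining maps and the spread $d$ is the difference of the images of $\phi$ and $\psi$ under those maps. I would then transport the bounds through in two moves. The induction hypothesis, applied to the $m$-fold object, bounds the spread $d$; monotonicity of $\Delta H$ in its second argument converts this into a bound on $\Delta H$. The base points are then reconciled with the icpd property: for the lower bound one uses $H_{\varepsilon'}(\psi)\le\bigodot_{r=1}^{m}(S_{\sigma_r}\circ H_{\varepsilon_r})(\psi)$, valid because deleting the nonnegative shifts $\sigma_r$ can only lower the value; for the upper bound one uses the companion comparison $\bigodot_{r=1}^{m}(S_{\sigma_r}\circ H_{\varepsilon_r})(\phi)\le H_{\varepsilon'}\circ S_{\sigma_u}(\phi)$, which I would extract from the standing hypothesis on the $(m+1)$-fold composition by stripping the outer factor $S_{\sigma_{m+1}}\circ H_{\varepsilon_{m+1}}$---legitimate since $\sigma_{m+1}\ge0$ and $H_{\varepsilon_{m+1}}$ is increasing.

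The dcpd case should follow verbatim with every inequality reversed, as the sign of $\frac{\partial}{\partial\phi}\Delta H$ enters only at the base-point reconciliation step. The hard part will be precisely this base-point bookkeeping: the two monotonicities of $\Delta H$ act in independent directions, so one must apply the spread bound from the induction hypothesis and the base-point shift from icpd in the right order, and verify at each stage that the auxiliary orderings of the intermediate phases really do hold under the standing assumptions $\sigma_r,\sigma_u\ge0$ and $\psi\le\phi$.
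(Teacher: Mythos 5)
Your proposal is correct and follows essentially the same route as the paper's proof: induction on $m$, peeling off the final interaction via the semigroup law $H_{\alpha}\circ H_{\beta}=H_{\alpha+\beta}$, rewriting each difference as $\Delta H$, and then applying the induction hypothesis through monotonicity of $\Delta H$ in its second argument followed by the icpd (or dcpd) base-point comparison, with exactly the two auxiliary orderings $H_{\varepsilon'}(\psi)\le\bigodot_{r=1}^{m}\left(S_{\sigma_{r}}\circ H_{\varepsilon_{r}}\right)(\psi)$ and $\bigodot_{r=1}^{m}\left(S_{\sigma_{r}}\circ H_{\varepsilon_{r}}\right)\le H_{\varepsilon'}\circ S_{\sigma_{u}}$ that the paper invokes. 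Your explicit treatment of the base case (reducing the upper bound at $m=1$ to the icpd inequality with shift $\sigma_{u}$) is slightly more careful than the paper's "trivially true," but it is the same argument.
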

This result is illustrated in Fig. \ref{cap: Analyse idp Rise-functions}
(a-c) for icpd rise functions. 

\begin{figure}[!t]
\begin{centering}
\includegraphics[scale=0.8]{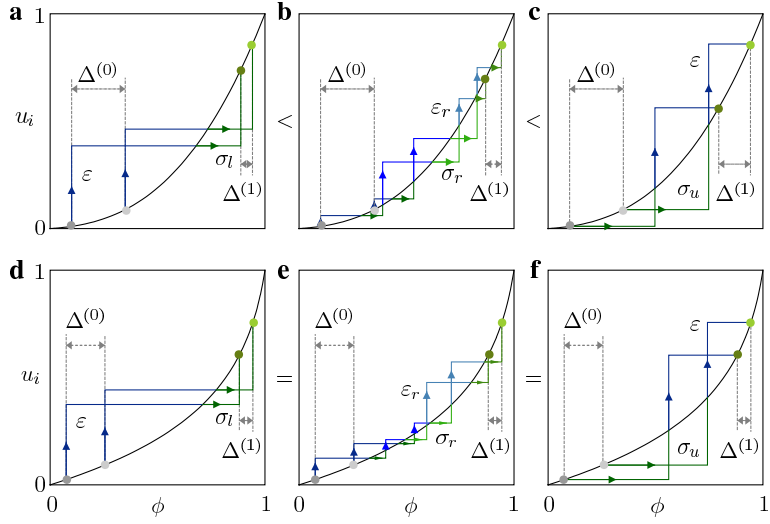} 
\par\end{centering}

\caption{\label{cap: Analyse idp Rise-functions}Rise functions with increasing
change (icpd) and no change of phase differences. \textbf{(a-c)} Icpd
rise function. An initial phase difference $\Delta^{(0)}$ changes
to $\Delta^{(1)}$ after applying a combination of interaction maps
$H_{\varepsilon_{r}}$ (blue) of total strength $\varepsilon=\sum_{r=1}^{m}\varepsilon_{r}$
and shifts $S_{\sigma_{r}}$ (green) such that the final maximal phase
values are identical. \textbf{(a) }For icpd rise functions the difference
$\Delta^{(1)}$ is the smallest when the interaction is applied in
total before the shifts, i.e. $H_{\varepsilon}\circ S_{\sigma_{l}}$
and \textbf{(c)} largest when applied after the shifts $S_{\sigma_{u}}\circ H_{\varepsilon}$.
\textbf{(b)} All other maps $\bigodot_{s=1}^{m}\left(H_{\varepsilon_{s}}\circ S_{\sigma_{s}}\right)$
produce phase differences which lie in between these extremal values
(cf. lemma \ref{lem: Analyse e/l contracting commutation relations}).
\textbf{(d-f)} The rise function $U_{b}$ is icpd and dcpd, i.e. the
phase difference $\Delta\phi^{(1)}$ is independent of the order in
which the interactions and shifts are applied.}

\end{figure}

The next theorem allows to determine bounds on the network parameters
in order to ensure invariance or decay of avalanches under return: 
\begin{thm}
\label{thm: Analyse Avalanche stability icpd dcpd}Consider a homogenous
excitatory all-to-all network of $N$ pulse-coupled oscillators evolving
according to \eqref{eq: Model interaction}-\eqref{eq: Model phase partial reset}
with neuronal partial reset $R$. 

For icdp rise functions $U$ the conditions \begin{equation}
U^{-1}\left(R\left((a_{1}-1)\varepsilon\right)\right)-U^{-1}\left(R\left((a_{1}-1)\varepsilon-a\varepsilon\right)\right)\le U^{-1}\left(1-(N-a_{1})\varepsilon\right)-U^{-1}\left(1-(N-a_{1})\varepsilon-a\varepsilon\right)\label{eq: Analyse icdp a-avlanche stable}\end{equation}
for all $a\in\left\{ 1,2,\dots,a_{1}-1\right\} $ are sufficient to
ensure the invariance of an $a_{1}$-avalanche under return. Necessary
conditions are 

\begin{equation}
U^{-1}\left(R\left((a_{1}-1)\varepsilon\right)+(N-a_{1})\varepsilon\right)-U^{-1}\left(R\left((a_{1}-1)\varepsilon-a\varepsilon\right)+(N-a_{1})\varepsilon\right)\le1-U^{-1}\left(1-a\varepsilon\right)\:.\label{eq: Analyse dcpd a-avlanche stable}\end{equation}

Likewise for dcpd rise functions $U$ sufficient conditions are \eqref{eq: Analyse dcpd a-avlanche stable}
and necessary conditions \eqref{eq: Analyse icdp a-avlanche stable}
for an $a_{1}$-avalanche to not split up under return. \end{thm}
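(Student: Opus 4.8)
The plan is to reduce the invariance question to the evolution of a single phase difference and to read off $w_a^{a_1}$ and $b_a^{a_1}$ from lemma \ref{lem: Analyse e/l contracting commutation relations}. Fix an admissible $\mathcal{F}\in\mathcal{T}_{a_1}$ and, for a given $a$, follow two representative oscillators: the one firing at phase $1$ (the ``top'' triggering the $a_1$-avalanche) and the one sitting at the lower endpoint $U^{-1}(1-a\varepsilon)$ of $I_a^T$. With self-interaction $\varepsilon_1=(a_1-1)\varepsilon$, \eqref{eq: Model def J} places them immediately after the avalanche at $p_T=U^{-1}(R((a_1-1)\varepsilon))$ and $p_L=U^{-1}(R((a_1-1)\varepsilon-a\varepsilon))$, with difference $\Delta_0:=p_T-p_L$ equal to the left-hand side of \eqref{eq: Analyse icdp a-avlanche stable}. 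Writing $M_{\mathcal F}=\hat G\circ S_{\sigma_1}\circ J_{\varepsilon_1}$ with $\hat G=\bigodot_{r=2}^{m}(S_{\sigma_r}\circ H_{\varepsilon_r})$, the total external input is $\sum_{r\ge 2}\varepsilon_r=(N-a_1)\varepsilon$ because each of the remaining $N-a_1$ units fires once per return period. Setting $x:=S_{\sigma_1}(p_T)$ and $y:=S_{\sigma_1}(p_L)$ we have $x-y=\Delta_0$ and, by trigger invariance, $\hat G(x)=M_{\mathcal F}(1)=1$; the invariance inequalities \eqref{eq: worst case cond}--\eqref{eq: best case cond} then become the requirement that the final difference $1-\hat G(y)$ stay at or below $1-U^{-1}(1-a\varepsilon)$.

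For an icpd rise function, lemma \ref{lem: Analyse e/l contracting commutation relations} sandwiches $\hat G(x)-\hat G(y)$ between the interaction-first value $H_{(N-a_1)\varepsilon}(x)-H_{(N-a_1)\varepsilon}(y)$ (the trailing shift $S_{\sigma_l}$ drops out of the difference) and the shift-first value $H_{(N-a_1)\varepsilon}(x+\sigma_u)-H_{(N-a_1)\varepsilon}(y+\sigma_u)$. For the sufficient condition I take the shift-first (most desynchronizing) upper bound and choose $\sigma_u$ so the top lands exactly at threshold, i.e. $x+\sigma_u=U^{-1}(1-(N-a_1)\varepsilon)=:q_T$; then $y+\sigma_u=q_T-\Delta_0$ and the bound becomes $1-H_{(N-a_1)\varepsilon}(q_T-\Delta_0)$, which is independent of $\mathcal F$. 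Since $H_{(N-a_1)\varepsilon}(q_L)=U^{-1}(1-a\varepsilon)$ for $q_L:=U^{-1}(1-(N-a_1)\varepsilon-a\varepsilon)$, requiring this bound to be $\le 1-U^{-1}(1-a\varepsilon)$ reduces by monotonicity of $H$ to $\Delta_0\le q_T-q_L$, which is exactly \eqref{eq: Analyse icdp a-avlanche stable}. This forces $w_a^{a_1}\ge U^{-1}(1-a\varepsilon)$, and the sufficiency part of the preceding lemma applies.

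For necessity I use the interaction-first (most synchronizing) lower bound. After the shift cancellation it reads $\hat G(x)-\hat G(y)\ge H_{(N-a_1)\varepsilon}(x)-H_{(N-a_1)\varepsilon}(y)$, and since for icpd the quantity $H_{(N-a_1)\varepsilon}(\phi+\Delta_0)-H_{(N-a_1)\varepsilon}(\phi)$ grows with $\phi$ while $\sigma_1\ge0$, it is minimized at $\sigma_1=0$, giving the $\mathcal F$-independent lower bound $H_{(N-a_1)\varepsilon}(p_T)-H_{(N-a_1)\varepsilon}(p_L)$, i.e. the left-hand side of \eqref{eq: Analyse dcpd a-avlanche stable}. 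As invariance forces $1-b_a^{a_1}=\inf_{\mathcal F}(\hat G(x)-\hat G(y))\le 1-U^{-1}(1-a\varepsilon)$, this lower bound inherits the same inequality and yields \eqref{eq: Analyse dcpd a-avlanche stable}. The dcpd case is identical except that lemma \ref{lem: Analyse e/l contracting commutation relations} reverses both inequalities, so interaction-first now maximizes and shift-first minimizes the difference; this interchanges the roles of $w_a^{a_1}$ and $b_a^{a_1}$ and hence swaps the sufficient condition \eqref{eq: Analyse dcpd a-avlanche stable} with the necessary one \eqref{eq: Analyse icdp a-avlanche stable}.

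The step needing the most care, and the main obstacle, is to make the $\mathcal F$-dependence genuinely disappear. Two book-keeping facts are essential. First, the trailing shift must cancel in the interaction-first difference, and a valid $\sigma_u\ge0$ with the top reaching threshold exactly must exist in the shift-first case; the latter follows from $\hat G(\phi)\ge H_{(N-a_1)\varepsilon}(\phi)$ (all intermediate shifts are nonnegative and $H_{\alpha}\circ H_{\beta}=H_{\alpha+\beta}$) together with $\hat G(x)=1$, which forces $x\le q_T$. Second, the clean lower bound at $\sigma_1=0$ requires $p_T\le q_T$; this is exactly where the neuronal hypothesis $R(\zeta)\le\zeta$ and the no-runaway condition $(N-1)\varepsilon<1$ enter, through $R((a_1-1)\varepsilon)+(N-a_1)\varepsilon\le (N-1)\varepsilon<1$. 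I would finally verify that every phase encountered stays in the domain $\mathcal D$ of $\Delta H$, which holds since trigger invariance keeps the cluster members sub-threshold between their consecutive firings.
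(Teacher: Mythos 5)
Your proof is correct and follows essentially the same route as the paper's: both apply Lemma \ref{lem: Analyse e/l contracting commutation relations} to sandwich the endpoint spread $M_{\mathcal{F}}(1)-M_{\mathcal{F}}\left(U^{-1}(1-a\varepsilon)\right)$ between the shift-first bound with $\sigma_{u}$ chosen so that the triggering oscillator lands exactly at $U^{-1}\left(1-(N-a_{1})\varepsilon\right)$ and the interaction-first bound with the initial shift set to zero, and then unwind the resulting $\mathcal{F}$-independent inequalities using monotonicity of $H$ and $U^{-1}$. Your additional bookkeeping (existence of $\sigma_{u}\ge0$ from $\hat{G}(x)=1$, and $p_{T}\le q_{T}$ via the neuronal reset and $(N-1)\varepsilon<1$) makes explicit steps the paper leaves implicit.
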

\begin{proof}
Using lemma \ref{lem: Analyse e/l contracting commutation relations}
we find for an icpd rise function and $\mathcal{F}\in\mathcal{T}_{a_{1}}$

\begin{eqnarray*}
M_{\mathcal{F}}(1)-M_{\mathcal{F}}\left(U^{-1}\left(1-a\varepsilon\right)\right) & = & \bigodot_{r=2}^{m}\left(S_{\sigma_{r}}\circ H_{\varepsilon_{r}}\right)\left(S_{\sigma_{1}}\circ J_{\varepsilon_{1}}\left(1\right)\right)-\bigodot_{r=2}^{m}\left(S_{\sigma_{r}}\circ H_{\varepsilon_{r}}\right)\left(S_{\sigma_{1}}\circ J_{\varepsilon_{1}}\left(U^{-1}\left(1-a\varepsilon\right)\right)\right)\\
 & \le & H_{\left(N-a_{1}\right)\varepsilon}\circ S_{\sigma_{u}}\circ J_{\varepsilon_{1}}\left(1\right)-H_{\left(N-a_{1}\right)\varepsilon}\circ S_{\sigma_{u}}\circ J_{\varepsilon_{1}}\left(U^{-1}\left(1-a\varepsilon\right)\right)\\
 & = & 1-H_{\left(N-a_{1}\right)\varepsilon}\circ S_{\sigma}\circ J_{\varepsilon_{1}}\left(U^{-1}\left(1-a\varepsilon\right)\right)\end{eqnarray*}
with \begin{equation}
\sigma_{u}=U^{-1}\left(1-\left(N-a_{1}\right)\varepsilon\right)-U^{-1}\left(R\left(a_{1}-1\right)\varepsilon\right)\label{eq: sigma u}\end{equation}
and thus \[
w_{a}^{a_{1}}=H_{\left(N-a_{1}\right)\varepsilon}\circ S_{\sigma_{u}}\circ J_{\varepsilon_{1}}\left(U^{-1}\left(1-a\varepsilon\right)\right)\]
in \eqref{eq: worst case cond} yielding conditions \eqref{eq: Analyse icdp a-avlanche stable}.
Similarly we find \textbf{\[
b_{a}^{a_{1}}=1-H_{\left(N-a_{1}\right)\varepsilon}\circ J_{\varepsilon_{1}}\left(1\right)+H_{\left(N-a_{1}\right)\varepsilon}\circ J_{\varepsilon_{1}}\left(U^{-1}\left(1-a\varepsilon\right)\right)\]
}which yields the necessary conditions \eqref{eq: Analyse dcpd a-avlanche stable}.
For dcdp rise functions the expressions for $w_{a}^{a_{1}}$ and $b_{a}^{a_{1}}$
are interchanged. \end{proof}
\begin{prop}
\label{prop: Analyse Bounds Cluster instability}In a homogeneous
all-to-all coupled network of $N$ neural oscillators evolving according
to \eqref{eq: Model interaction}-\eqref{eq: Model phase partial reset}
with neuronal partial reset $R$ and dcdp rise function $U$ the condition
\begin{equation}
R'(\zeta)>\frac{U'\left(U^{-1}\left(R(\zeta)\right)\right)}{U'\left(U^{-1}\left(1-(N-1)\varepsilon+\zeta\right)\right)}\quad\text{for all}\:(a_{1}-2)\varepsilon\le\zeta\le(a_{1}-1)\varepsilon\label{eq: Analyse Cluster instability}\end{equation}
is sufficient to ensure non-invariance of an $a_{1}$-cluster under
return.\begin{proof}
Using lemma \ref{lem: Analyse e/l contracting commutation relations}
for a given firing sequence $\mathcal{F}=\left\{ \left(a_{s}\varepsilon,\sigma_{s}\right)\right\} _{s=1}^{m}$
we estimate\begin{equation}
1-H_{\left(N-a_{1}\right)\varepsilon}\circ S_{\sigma_{u}}\circ J_{\varepsilon_{1}}\left(1-\Delta\phi\right)\le1-M_{\mathcal{F}}\left(1-\Delta\phi\right)\le1-S_{\sigma_{l}}\circ H_{\left(N-a_{1}\right)\varepsilon}\circ J_{\varepsilon_{1}}\left(1-\Delta\phi\right)\label{eq: Analyse Delta_M bounds 2}\end{equation}
 with $\sigma_{u}$ as in \eqref{eq: sigma u} and $\sigma_{l}=1-H_{\left(N-a_{1}\right)\varepsilon}\left(J_{a_{1}\varepsilon}(1)\right)$.
In general a $a_{1}$-cluster is triggered by a single oscillator
and thus if\begin{equation}
1-H_{\left(N-a_{1}\right)\varepsilon}\circ S_{\sigma_{u}}\circ J_{a_{1}}\left(1-\Delta\phi\right)>\Delta\phi\label{eq: hhh}\end{equation}
for all $0<\Delta\phi\le1-U^{-1}\left(1-\varepsilon\right)$, we have
$1-M_{\mathcal{F}}\left(1-\Delta\phi\right)>\Delta\phi$ which implies
that after a finite number of iterations of the return map the firing
of the first oscillator does not trigger the avalanche any more. Setting
$\zeta=U\left(1-\Delta\phi\right)+(a_{1}-1)\varepsilon-1$ condition
\eqref{eq: hhh} is equivalent to \begin{equation}
U^{-1}\left(R\left((a_{1}-1)\varepsilon\right)\right)-U^{-1}\left(R\left(\zeta\right)\right)>U^{-1}\left(1-(N-a)\varepsilon\right)-U^{-1}\left(1-(N-1)\varepsilon+\zeta\right)\label{eq: hkk}\end{equation}
for all $(a_{1}-2)\varepsilon\le\zeta<(a_{1}-1)\varepsilon$. For
$\zeta=(a_{1}-1)\varepsilon$ both sides in \eqref{eq: hkk} are equal
and the condition \eqref{eq: Analyse Cluster instability} thus ensures
\eqref{eq: hkk} to hold for all $(a_{1}-2)\varepsilon\le\zeta<(a_{1}-1)\varepsilon$.
\end{proof}
\end{prop}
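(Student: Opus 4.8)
The plan is to prove non-invariance by showing that, under \eqref{eq: Analyse Cluster instability}, the phase gap between the oscillator that triggers the avalanche (kept at phase $1$ by trigger invariance) and the trailing members of the $a_1$-cluster strictly grows under \emph{every} admissible firing sequence, so that after finitely many returns the trailing oscillators drop below the value $U^{-1}(1-\varepsilon)$ at which a single preceding spike can still lift them across threshold; at that moment the first spike no longer triggers them and the cluster splits. I would parametrise a trailing phase as $1-\Delta\phi$ and follow the post-return gap $1-M_{\mathcal{F}}(1-\Delta\phi)$, where $M_{\mathcal{F}}(\phi)=\bigodot_{r=2}^{m}\left(S_{\sigma_r}\circ H_{\varepsilon_r}\right)\circ S_{\sigma_1}\circ J_{\varepsilon_1}(\phi)$ is the return map restricted to the first-avalanche oscillators.

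The first step is a uniform lower bound on this gap. Between the triggering avalanche and its repetition the trailing oscillators absorb the sub-threshold charge of all other oscillators, totalling $(N-a_1)\varepsilon$, so Lemma \ref{lem: Analyse e/l contracting commutation relations} in its dcpd form applies to the composition $\bigodot_{r=2}^{m}\left(S_{\sigma_r}\circ H_{\varepsilon_r}\right)\circ S_{\sigma_1}$. For a dcpd rise function the smallest attainable gap corresponds to performing the full interaction after the shift, i.e.\ to $H_{(N-a_1)\varepsilon}\circ S_{\sigma_u}$, with $\sigma_u=U^{-1}\!\left(1-(N-a_1)\varepsilon\right)-U^{-1}\!\left(R((a_1-1)\varepsilon)\right)$ chosen precisely so that the triggering oscillator returns to phase $1$. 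This yields $1-M_{\mathcal{F}}(1-\Delta\phi)\ge 1-H_{(N-a_1)\varepsilon}\circ S_{\sigma_u}\circ J_{\varepsilon_1}(1-\Delta\phi)$ uniformly over admissible $\mathcal{F}$, so it suffices to show the right-hand side exceeds $\Delta\phi$ for every $0<\Delta\phi\le 1-U^{-1}(1-\varepsilon)$.

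The second step converts this into the stated closed form. Writing out the composition $H_{(N-a_1)\varepsilon}\circ S_{\sigma_u}\circ J_{\varepsilon_1}$ through $U$, $U^{-1}$ and $R$ and substituting $\zeta=U(1-\Delta\phi)+(a_1-1)\varepsilon-1$ maps the admissible range of $\Delta\phi$ onto $(a_1-2)\varepsilon\le\zeta<(a_1-1)\varepsilon$ and turns the gap inequality into $U^{-1}\!\left(R((a_1-1)\varepsilon)\right)-U^{-1}\!\left(R(\zeta)\right)>U^{-1}\!\left(1-(N-a_1)\varepsilon\right)-U^{-1}\!\left(1-(N-1)\varepsilon+\zeta\right)$. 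Denoting the difference of the two sides by $f(\zeta)$, a direct check gives $f\!\left((a_1-1)\varepsilon\right)=0$, using $1-(N-1)\varepsilon+(a_1-1)\varepsilon=1-(N-a_1)\varepsilon$. It therefore suffices to make $f$ strictly decreasing on the interval, so that $f>0$ strictly to the left of the common endpoint. Differentiating with $(U^{-1})'(x)=1/U'(U^{-1}(x))$ gives $f'(\zeta)=-R'(\zeta)/U'\!\left(U^{-1}(R(\zeta))\right)+1/U'\!\left(U^{-1}(1-(N-1)\varepsilon+\zeta)\right)$, and $f'(\zeta)<0$ is exactly \eqref{eq: Analyse Cluster instability}.

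I expect the main obstacle to be the uniform lower bound of the first step: one must verify the hypotheses of Lemma \ref{lem: Analyse e/l contracting commutation relations} (nonnegativity of all intermediate shifts along any trigger-invariant firing sequence, the ordering of the two trajectories, and that the single shift $\sigma_u$ above genuinely dominates the composed shifts in the dcpd sense while the top oscillator is fixed at phase $1$), and then translate the pointwise growth $1-M_{\mathcal{F}}(1-\Delta\phi)>\Delta\phi$ into the dynamical statement that the gap escapes the triggering window $\left[0,\,1-U^{-1}(1-\varepsilon)\right]$ in finitely many returns. The change of variables and the boundary-value-plus-derivative argument of the remaining steps are then routine.
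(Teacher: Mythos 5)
Your proposal follows the paper's own argument essentially step for step: the same dcpd lower bound $1-M_{\mathcal{F}}(1-\Delta\phi)\ge 1-H_{(N-a_1)\varepsilon}\circ S_{\sigma_u}\circ J_{\varepsilon_1}(1-\Delta\phi)$ from Lemma \ref{lem: Analyse e/l contracting commutation relations} with the same $\sigma_u$, the same substitution $\zeta=U(1-\Delta\phi)+(a_1-1)\varepsilon-1$, and the same boundary-value-plus-monotonicity argument at $\zeta=(a_1-1)\varepsilon$. The only difference is that you write out the derivative computation that the paper leaves implicit, which is a correct and welcome clarification but not a different route.
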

Note that for convex $U$ and neuronal $R$ the right hand side of
inequality \eqref{eq: Analyse Cluster instability} is smaller than
one and thus a sufficient condition for $a_{1}$-avalanches to split
up after a finite number of applications of the return map is \[
R'(\zeta)\ge1\quad\text{for all }\,\zeta\in\left[(a_{1}-2)\varepsilon,(a_{1}-1)\varepsilon\right]\]
In particular for a partial reset $R_{c=1}(\zeta)=\zeta$ all avalanches
become unstable under the return map and thus by corollary \ref{cor: Analyse Asynchron stable}
only the asynchronous state remains stable for all convex dcpd $U$
and $c=1$. 

We used theorem \ref{thm: Analyse cluster states} and proposition
\ref{prop: Analyse Bounds Cluster instability} to determine for a
convex LIF rise function $U_{\mathrm{LIF}}^{\mathrm{CB}}$ (cf. \ref{eq: Model U CB LIF}
eq. \eqref{eq: Model U CB QIF}) and linear partial reset $R_{c}$
the regime where avalanches of different sizes become unstable under
return. The most strict condition in \eqref{eq: Analyse icdp a-avlanche stable}
is for $a=1$ which yields an implicit equation for the lower bounds
on the critical $c$ values below which the invariance of $a_{1}$-avalanches
is ensured. The upper bound is obtained by \eqref{eq: Analyse dcpd a-avlanche stable}
using $a=1$ and is very close to the bound given in proposition \ref{prop: Analyse Bounds Cluster instability}.
The bounds are plotted in fig. \ref{cap: Analyse Bounds-for-clustersizes}
and are in good agreement with the numerical data. 

\begin{figure}[!b]
\begin{centering}
\includegraphics{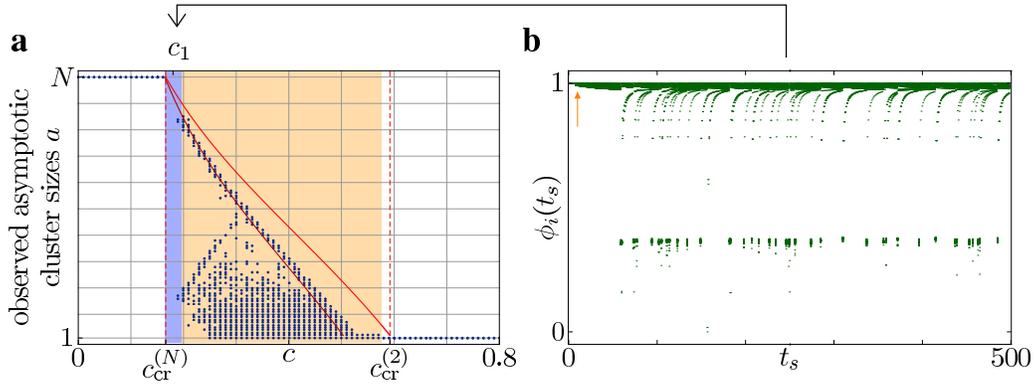}
\par\end{centering}

\caption{\label{cap: Analyse Bounds-for-clustersizes}Sequential desynchronization
in a network ($N=100$) with icpd rise function $U_{\mathrm{LIF}}^{\mathrm{CB}}$\textbf{
}($E_{\mathrm{eq}}=1.1$, $E_{\mathrm{syn}}=3$) and linear partial
reset $R_{c}$. \textbf{(a)} Observed cluster sizes of periodic states
after a time $t=10000$. For each $c$ value $100$ simulations were
started in the synchronous state with a small perturbation added.
The upper line shows the bounds on $a$ obtained from \eqref{eq: Analyse dcpd a-avlanche stable}
in theorem \ref{thm: Analyse Avalanche stability icpd dcpd} above
which $a$-clusters are unstable. The lower line is the bound obtained
via \eqref{eq: Analyse icdp a-avlanche stable} below which $a$-clusters
are ensured to be stable. The shaded area marks the transition region
where states other than the synchronous an asynchronous state are
observed. In the blue region we find no periodic asymptotic dynamics.
The dashed lines show the theoretical bounds for the transition region.
\textbf{(b)} Aperiodic dynamics for $c_{1}=0.18$.}

\end{figure}

Near the lower transition point $c_{\mathrm{crit}}^{(N)}$ the system
shows aperiodic behavior when starting close to the synchronous state.
A possible explanation for this dynamics is the competition of two
counteracting mechanisms: (i) Large avalanches become unstable under
return and thus tend to desynchronize the phases which results in
a split of the avalanche into smaller stable avalanches. (ii) The
solution to equation \eqref{eq: SigmaStar} for these asynchronously
firing smaller clusters involves $\sigma_{r}^{*}\le0$, i.e. the smaller
avalanches tend to absorb each other and resynchronize the system
yielding again larger unstable avalanches. Note that here irregular
dynamics arise via a mechanism different form as for example network
heterogeneity \cite{Denker:2004} or using excitatory and inhibitory
interactions \cite{vanVreeswijk:1996a}.

\subsection{\label{sub:Solvable Example}Extensive Sequence of Desynchronizing
Bifurcations -- A Solvable Example}

Figure \ref{cap: Analyse idp Rise-functions} (d-f) illustrates that
the rise function $U_{b}$ is both icpd and dcpd. In fact, \[
\Delta H_{b}\left(\phi,\Delta\phi,\varepsilon\right):=H_{b}\left(\phi+\Delta\phi,\varepsilon\right)-H_{b}\left(\phi,\varepsilon\right)=\Delta\phi e^{b\varepsilon}\]
 is independent of $\phi$ and hence \[
\frac{\partial}{\partial\phi}\Delta H_{b}\left(\phi,\Delta\phi,\varepsilon\right)=0\:.\]
Thus for $U_{b}$ equality holds in \eqref{eq: Analyse less idp}
and the ''best''- and ''worst-case'' return maps become identical.
This property allows to obtain exact analytical results.
\begin{prop}
\label{thm: Analyse Desynchronization U_b R_c}Consider a homogenous
excitatory all-to-all network of $N$ pulse-coupled oscillators evolving
according to \eqref{eq: Model interaction}-\eqref{eq: Model phase partial reset}
with convex rise function $U_{b}$ ($b<0$) and neuronal partial reset
$R_{c}$. 

Then for each $2\le a\le N$ there exist a critical reset strength
$c_{\mathrm{cr}}^{(a)}$ such that for all $c>c_{\mathrm{cr}}^{(a)}$
avalanches of size greater or equal to $a$ are unstable under return
and avalanches of size smaller than $a$ are stable. For $c\le c_{\mathrm{cr}}^{(N)}$
all avalanches are stable under return. The critical reset strengths
are determined from the equation

\begin{equation}
e^{b\left(1-\left[(N-a)+c_{\mathrm{cr}}^{(a)}(a-1)\right]\varepsilon\right)}=\frac{\left(e^{-bc_{\mathrm{cr}}^{(a)}\varepsilon}-1\right)}{\left(e^{-b\varepsilon}-1\right)}\label{eq: Analyse c crit implicit theorem}\end{equation}
and satisfy $0<c_{\mathrm{cr}}^{(N)}<c_{\mathrm{cr}}^{(N-1)}<\dots<c_{\mathrm{cr}}^{(2)}<1$.\begin{proof}
Since $U_{b}$ is icpd and dcpd, equality holds in \eqref{eq: Analyse Delta_M bounds 2},
i.e. for $\mathcal{F}\in\mathcal{T}_{a_{1}}$\begin{eqnarray}
\Delta M_{\mathcal{F}}\left(\Delta\phi\right):=1-M_{\mathcal{F}}\left(1-\Delta\phi\right) & = & 1-S_{\sigma_{l}}\circ H_{\left(N-a_{1}\right)\varepsilon}\circ J_{a_{1}\varepsilon}\left(1-\Delta\phi\right)\label{eq: Analyse Delta M U_b exact}\end{eqnarray}
Thus the return map for the phase differences only depends on the
avalanche size $a_{1}$ and is independent of the precise form of
the other avalanches $a_{i}$, $i>1$ and intermediate shifts $\sigma_{i}$.
Explicitly \[
\Delta M_{\mathcal{F}}\left(\Delta\phi\right)=\frac{e^{b\varepsilon\left(N-a_{1}+c(a_{1}-1)\right)}}{1-e^{b}}\left(e^{-bc}\left(e^{b}+\left(1-e^{b}\right)\Delta\phi\right)^{c}-1\right)\]
for all $\mathcal{F}\in\mathcal{T}_{a_{1}}$. A straight forward calculation
shows that $\Delta M_{\mathcal{F}}$ has the properties \begin{equation}
\Delta M_{\mathcal{F}}\left(0\right)=0\quad,\quad\frac{d}{d\Delta\phi}\Delta M_{\mathcal{F}}\left(\Delta\phi\right)\ge0\quad\textrm{and}\quad\frac{d^{2}}{d\Delta\phi^{2}}\Delta M_{\mathcal{F}}\left(\Delta\phi\right)\le0\label{eq: Analyse Delta M properties}\end{equation}
Thus if the condition \begin{equation}
\Delta M_{\mathrm{\mathcal{F}}}\left(1-U^{-1}\left(1-\varepsilon\right)\right)\le1-U^{-1}\left(1-\varepsilon\right)\label{eq: Analyse cluster stability 3}\end{equation}
is met all other conditions for $1\le a<a_{1}$ in \eqref{eq: Analyse icdp a-avlanche stable}
are also satisfied. On the other hand almost all perturbations will
cause the avalanche to be triggered by a single oscillator. Thus if
condition \eqref{eq: Analyse cluster stability 3} is not satisfied,
i.e. $\Delta M_{\mathcal{F}}\left(\Delta\phi\right)>\Delta\phi$ for
all $\Delta\phi\ge U^{-1}\left(1-\varepsilon\right)-1$ the avalanche
will split up after a finite number of iterations of the return map.
Thus \eqref{eq: Analyse cluster stability 3} is a necessary and sufficient
condition for stability of an $a$-cluster under the return map. We
are interested in the critical strengths $c_{\mathrm{crit}}^{(a)}$
for which an $a$-cluster becomes unstable and hence we use equality
in \eqref{eq: Analyse cluster stability 3} and basic algebra to obtain
the implicit expressions \eqref{eq: Analyse c crit implicit theorem}
for the $c_{\mathrm{cr}}^{(a)}$.

Since we have assumed $\left(N-1\right)\varepsilon<1$, $b<0$ and
$c\in\left[0,1\right]$ we see that the left hand side of \eqref{eq: Analyse c crit implicit theorem}
lies in the interval $\left(0,1\right)$ and decreases monotonically
with increasing $c$. The right hand side is $0$ for $c=0$ and increases
monotonically with $c$ until it becomes $1$ for $c=1$. Thus by
continuity for all $2\le a\le N$ there always exist a solution $0<c_{\mathrm{cr}}^{(a)}<1$
to this equation. Note that the special case $a=2$ is explicitly
solvable for $c_{\mathrm{cr}}^{(2)}$ and yields \begin{equation}
c_{\mathrm{cr}}^{(2)}=\frac{1}{b\varepsilon}\log\left(1+e^{-b(N-2)\varepsilon+b}\left(1-e^{-b\varepsilon}\right)\right)\label{eq: ccr 2}\end{equation}
For fixed $0\le c<1$ the left hand side of \eqref{eq: Analyse c crit implicit theorem}
is strict monotonically decreasing as $a$ increases whereas the left
hand side is independent of $a$, thus $0<c_{\mathrm{cr}}^{(N)}<c_{\mathrm{cr}}^{(N-1)}<\dots<c_{\mathrm{cr}}^{(2)}<1$.
\end{proof}
\end{prop}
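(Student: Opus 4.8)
The plan is to exploit the exceptional structure of $U_{b}$, which by the discussion preceding the proposition is \emph{simultaneously} icpd and dcpd, so that equality holds throughout Lemma~\ref{lem: Analyse e/l contracting commutation relations}. Consequently the ``best'' and ``worst'' case bounds $w_{a}^{a_{1}}$ and $b_{a}^{a_{1}}$ of Theorem~\ref{thm: Analyse Avalanche stability icpd dcpd} coincide and the chain of inequalities \eqref{eq: Analyse Delta_M bounds 2} collapses to an identity. The first step is therefore to record that the difference return map $\Delta M_{\mathcal{F}}(\Delta\phi):=1-M_{\mathcal{F}}(1-\Delta\phi)$ depends only on the size $a_{1}$ of the first avalanche, and neither on the remaining avalanche sizes nor on the intermediate shifts. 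I would then compute it in closed form by inserting $U_{b}$ and $R_{c}$ into $S_{\sigma_{l}}\circ H_{(N-a_{1})\varepsilon}\circ J_{a_{1}\varepsilon}$, obtaining the explicit expression for $\Delta M_{\mathcal{F}}(\Delta\phi)$ quoted in the statement.

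Second, I would read off from this closed form the three qualitative properties $\Delta M_{\mathcal{F}}(0)=0$, $\frac{d}{d\Delta\phi}\Delta M_{\mathcal{F}}\ge 0$ and $\frac{d^{2}}{d\Delta\phi^{2}}\Delta M_{\mathcal{F}}\le 0$, all of which follow from $b<0$ and $0<c\le 1$. Concavity pinned at the origin is the conceptual engine: it forces the graph of $\Delta M_{\mathcal{F}}$ to meet the diagonal $\Delta\phi\mapsto\Delta\phi$ at most once away from $0$, so the whole family of invariance conditions \eqref{eq: Analyse icdp a-avlanche stable} for $a=1,\dots,a_{1}-1$ reduces to the single scalar test at the endpoint $\Delta\phi=1-U^{-1}(1-\varepsilon)$, namely $\Delta M_{\mathcal{F}}\bigl(1-U^{-1}(1-\varepsilon)\bigr)\le 1-U^{-1}(1-\varepsilon)$. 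If this endpoint inequality holds, concavity yields $\Delta M_{\mathcal{F}}(\Delta\phi)\le\Delta\phi$ on the whole interval and the cluster is invariant; if it fails, comparison with the chord through the origin forces $\Delta M_{\mathcal{F}}(\Delta\phi)>\Delta\phi$ throughout $(0,1-U^{-1}(1-\varepsilon)]$, and since a generic perturbation leaves the avalanche triggered by a single oscillator, iterating the return map drives the phase difference up until the avalanche splits. This makes the endpoint condition both necessary and sufficient for stability of an $a_{1}$-cluster.

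Third, I would define $c_{\mathrm{cr}}^{(a)}$ by imposing equality in the endpoint condition and simplify, using the exponential form of $U_{b}$, to the implicit relation \eqref{eq: Analyse c crit implicit theorem}. Existence and uniqueness of a root in $(0,1)$ then follow from a monotonicity and intermediate-value argument: with $(N-1)\varepsilon<1$ and $b<0$ the left-hand side of \eqref{eq: Analyse c crit implicit theorem} stays in $(0,1)$ and decreases strictly in $c$, while the right-hand side rises from $0$ at $c=0$ to $1$ at $c=1$, so the curves cross exactly once. The ordering $0<c_{\mathrm{cr}}^{(N)}<\dots<c_{\mathrm{cr}}^{(2)}<1$ follows because, at fixed $c$, only the left-hand side depends on $a$ and does so strictly monotonically; the case $a=2$ solves in closed form. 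Combining the per-cluster threshold with this ordering gives the stated sequential picture (avalanches of size $\ge a$ unstable once $c$ exceeds $c_{\mathrm{cr}}^{(a)}$, and all avalanches stable for $c\le c_{\mathrm{cr}}^{(N)}$), while Corollary~\ref{cor: Analyse Asynchron stable} ensures that whenever clusters split the dynamics can settle on the always-existing, stable asynchronous state.

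The step I expect to be the main obstacle is the rigorous reduction of the infinite family of admissible firing-sequence conditions to the single endpoint inequality. This demands both the exact icpd-and-dcpd cancellation that eliminates all dependence on the $a_{i}$ with $i>1$ and on the shifts $\sigma_{i}$, and a careful concavity argument (chord comparison through the origin) guaranteeing that the endpoint test captures stability for \emph{every} intermediate $a$. The book-keeping in the necessity direction, that a generic perturbation leaves the avalanche single-triggered so that the iterated map genuinely forces a split, must also be handled with care.
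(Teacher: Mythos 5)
Your proposal reproduces the paper's argument essentially verbatim: the collapse of the best/worst-case bounds via the simultaneous icpd and dcpd property of $U_{b}$, the closed-form concave $\Delta M_{\mathcal{F}}$ pinned at the origin, the reduction of all invariance conditions to the single endpoint test, and the monotonicity/intermediate-value analysis of the implicit equation for $c_{\mathrm{cr}}^{(a)}$. The approach and all key steps coincide with the paper's proof, so no further comparison is needed.
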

The theoretical prediction \eqref{eq: Analyse c crit implicit theorem}
for the desynchronization transition is plotted in Fig. \ref{fig: Desync prob}
and is in excellent agreement with the numerically observed transition. 
\begin{rem}
Note that \eqref{eq: Analyse c crit implicit theorem} involves all
relevant network parameter. In particular, by choosing $b\rightarrow-\infty$
equation \eqref{eq: ccr 2} shows that $c_{\mathrm{cr}}^{(2)}$ can
be made arbitrarily small. This implies that the entire sequence of
desynchronizing bifurcations may occur over an arbitrary small interval
$\left[c_{\mathrm{cr}}^{(N)},c_{\mathrm{cr}}^{(2)}\right]$. 
\end{rem}

\begin{rem}
We also remark that the number of bifurcation points in this sequence
is $N-1$. At each bifurcation point $c_{\mathrm{cr}}^{(a)}$ all
periodic states with at least one cluster of size $a$ and all other
cluster sizes less or equal to $a$, i.e. an extensive combinatorial
number of states, becomes unstable simultaneously. 
\end{rem}
The mechanism underlying the desynchronization transition are opposing
synchronization and desynchronization dynamics in the network as illustrated
in Fig. \ref{cap: Analyse Convex Synchronization-and-Desynchronization}:
Due to the convexity of the rise function (a) sub-threshold inputs
are always synchronizing and stabilize the avalanche, whereas depending
on the strength of the partial reset supra-threshold inputs in an
avalanche can either (b) synchronize or (c) desynchronize the phases.
Thus for a weak partial reset (e.g. $R_{c}$ with $c\approx0$) states
with large avalanches are stable. When the partial reset is stronger
it desynchronizes the cluster and, depending on the avalanche size,
it may outweigh the synchronization effect due to sub-threshold inputs.
Larger avalanches receive less synchronizing sub-threshold input from
other oscillators and simultaneously produce a larger supra-threshold
input than smaller ones. Thus they lose invariance under return first
when increasing the partial reset strength.

\begin{figure}[t]
\begin{centering}
\includegraphics{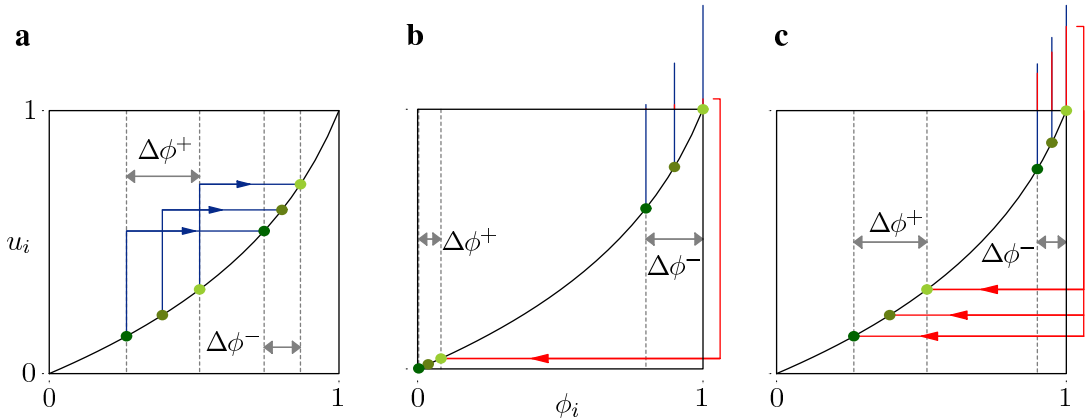} 
\par\end{centering}

\caption{\label{cap: Analyse Convex Synchronization-and-Desynchronization}Synchronization
and Desynchronization of avalanches in networks with convex rise function
and partial reset. \textbf{(a)} Sub-threshold inputs synchronize the
oscillators. The phase difference of a cluster before pulse reception
$\Delta\phi^{+}$ is decreased to $\Delta\phi^{-}$ afterwards, i.e.
$\Delta\phi^{+}<\Delta\phi^{-}$. \textbf{(a)} Weak partial reset
(e.g. $c\approx0$ for $R_{c}$) synchronize phase differences: $\Delta\phi^{+}<\Delta\phi^{-}$.
\textbf{(b)} Due to the convexity of the rise function a strong partial
reset ($c\approx1$) expands the phase differences $\Delta\phi^{+}>\Delta\phi^{-}$.
Clusters lose stability if mechanism in (c) becomes dominant over
the stabilizing effect (a).}

\end{figure}

\section{\label{sec:Robustness}Robustness of the Desynchronization Transition}

The desynchronization transition is robust against structural perturbations
in the coupling matrix and the rise function $U$.

\subsection{Coupling Strength Inhomogeneity}

\begin{figure}[th]
\begin{centering}
\includegraphics{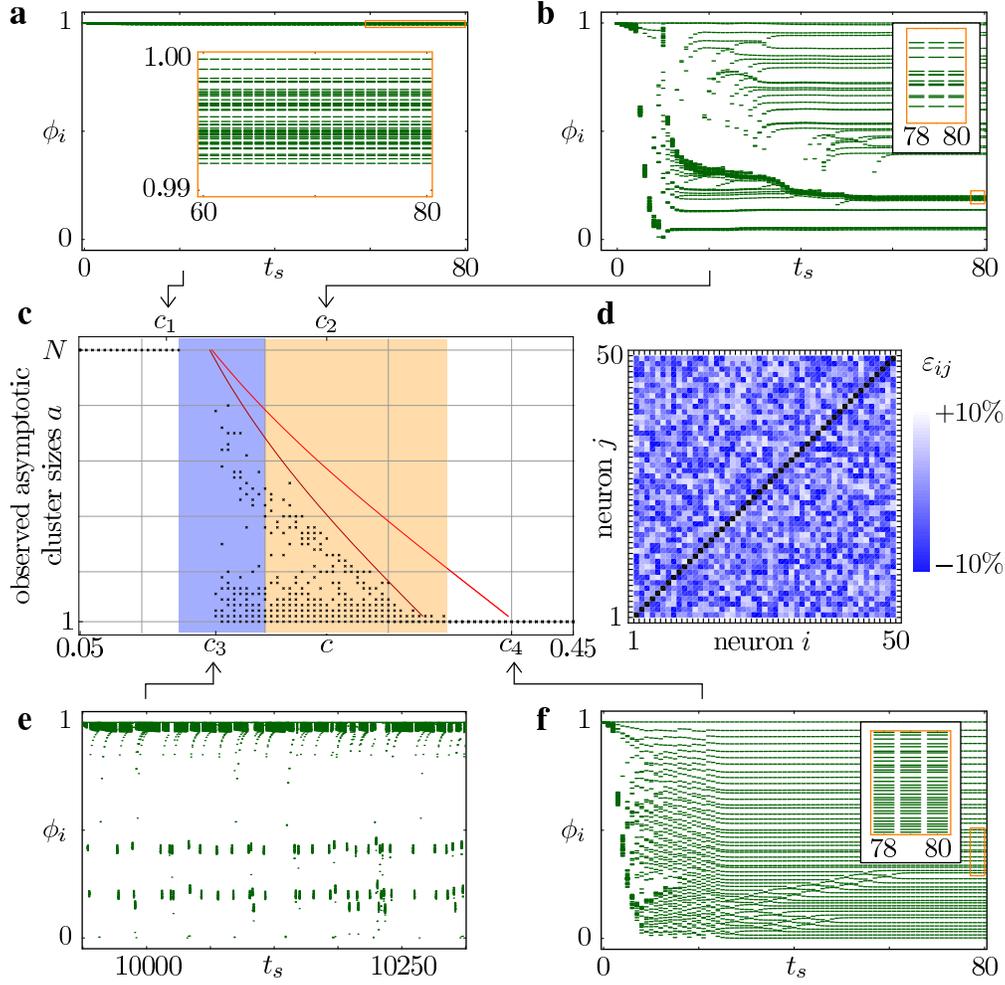}
\par\end{centering}

\caption{\label{cap: Analyse desynchronization-transition inhomogen}Sequential
desynchronization transition in an inhomogeneous network. Shown are
the dynamics of a network of $N=50$ units with convex rise-function
$U_{\mathrm{LIF}}^{\mathrm{CB}}$ ($E_{\mathrm{eq}}=1.1$ and $E_{\mathrm{syn}}=3$)
and linear partial reset $R_{c}$. \textbf{(d)} coupling strengths
are drawn from a random uniform distribution from $\varepsilon_{ij}\in\left[0.009,0.011\right]$
excluding self-coupling ($\varepsilon_{ii}=0$). Simulations are initialized
in a perturbed synchronous state. \textbf{(a)} Synchronous firing
for $c_{1}=0.12$. Inset: Phase are not synchronized due to the coupling
inhomogeneity. \textbf{(b)} For $c_{2}=0.19$ the synchronous state
becomes unstable and smaller avalanches are observed in the asymptotic
dynamics Inset: As in the synchronous state phases of units in a single
avalanche are not phase synchronized. \textbf{(e)} For $c_{3}=0.16$
we do observe aperiodic firing until $t=15000$. \textbf{(f)} For
$c_{4}=0.4$ asynchronous firing is observed in the asymptotic dynamics.
Inset: Spacings of the phases are irregular due to the network heterogeneity.
\textbf{(c)} Asymptotic cluster sizes of periodic states for different
$c$ values starting from $100$ different perturbed synchronous states.
The lines show the lower and upper estimates for the transition obtained
from theorem \ref{thm: Analyse Avalanche stability icpd dcpd} assuming
a homogeneous network with coupling strength $\varepsilon=0.01$.
The shaded blue area indicates that states with aperiodic dynamics
until $t=15000$ are observed. }

\end{figure}

The desynchronization transition is robust against perturbations in
the coupling matrix $\varepsilon_{ij}$. Our numerical experiments
show that the transition is also observed when using coupling strengths
from a uniform distribution on an interval $[\varepsilon_{\mathrm{min}},\varepsilon_{\mathrm{max}}]$
for a interval length $\Delta\varepsilon=\varepsilon_{\mathrm{max}}-\mathrm{\varepsilon_{min}}$
as large as $20\%$ of the average coupling strength $\bar{\varepsilon}=\left(\varepsilon_{\mathrm{max}}-\mathrm{\varepsilon_{min}}\right)/2$
(cf. fig. \ref{cap: Analyse desynchronization-transition inhomogen}).
When $\Delta\varepsilon$ becomes larger usually complex spike patterns
and non-periodic states are observed.

In inhomogeneous networks sub-threshold inputs of different strengths
desynchronize units initially at the same phase. Thus coupling inhomogeneity
destabilizes clusters of a given size $a$. In fact, already the lower
bound $c_{\mathrm{crit}}^{(a)}$ obtained for homogeneous networks
via theorem \ref{thm: Analyse Avalanche stability icpd dcpd} using
the coupling strength $\bar{\varepsilon}$ over-estimates the stability
of the clusters as shown in fig. \ref{cap: Analyse desynchronization-transition inhomogen}.
The regime where we observe aperiodic dynamics becomes larger in comparison
to homogeneous networks with the same average coupling strength (e.g.
compare fig. \ref{cap: Analyse desynchronization-transition inhomogen}
and fig. \ref{cap: Analyse Bounds-for-clustersizes}). This is due
to cluster states in the homogeneous network with asymptotic phases
of the clusters which are close to an absorption (i.e. there are $\sigma_{i}^{*}\approx0$
for some $i$). A perturbation in the coupling now enables the absorption
and the restless competition between desynchronization and synchronization
(cf. sec. \ref{sub:Stability-against-Cluster}) induces the aperiodic
dynamics.

Another effect of inhomogeneous coupling is that units synchronized
to fire together in clusters are not phase synchronized (cf. insets
in fig. \ref{cap: Analyse desynchronization-transition inhomogen})
in the asymptotic dynamics. Also intra-cluster phases of equally sized
clusters and in particular of the asynchronous state show irregular
spacings.

\subsubsection{Sigmoidal Rise Functions}

\begin{figure}[th]
\begin{centering}
\includegraphics{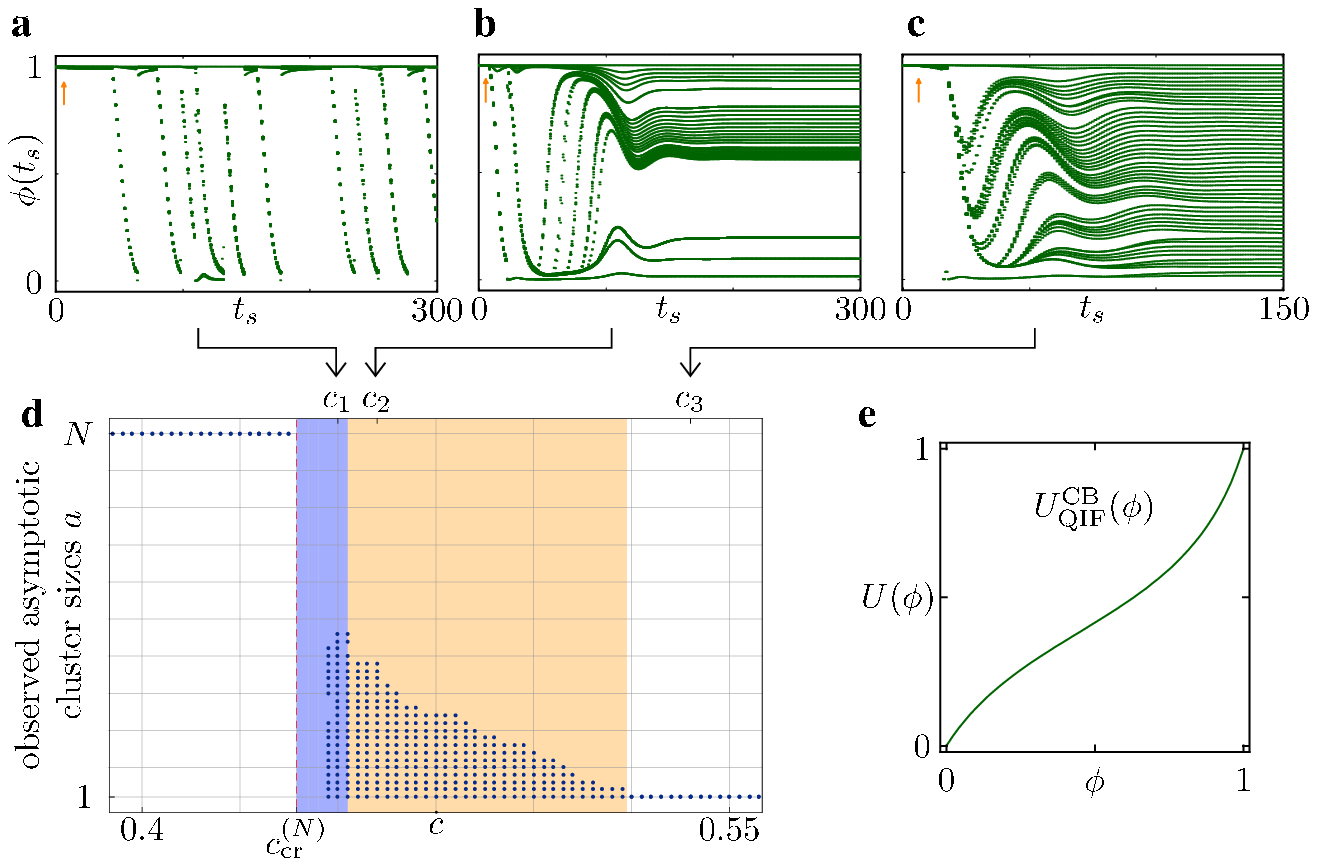}
\par\end{centering}

\caption{\label{cap: Analyse desync general RF}Sequential desynchronization
transition in networks of neural oscillators with a sigmoidal rise
function. Shown are the dynamics of a homogeneous network ($N=100$,
$\varepsilon=0.002$) with linear partial reset $R_{c}$ and \textbf{(e)}
sigmoidal rise function $U_{\mathrm{LIF}}^{\mathrm{CB}}$ ($E_{\mathrm{syn}}=2$,
$\alpha=-1$, $\beta=1$). Starting with synchrony and inducing a
small perturbation (arrow) the network shows \textbf{(a)} aperiodic
dynamics for $c=c_{1}=0.45$, \textbf{(b)} clustering for $c=c_{2}=0.46$
and \textbf{(c)} asynchronous dynamics for $c=c_{3}=0.54$. Note the
oscillations of the phase which do not appear for purely convex rise
functions (cf. fig. \ref{cap: Analyse Bounds-for-clustersizes}).
\textbf{(d) }Cluster sizes of periodic states observed in the dynamics
at $t=5000$ starting from $200$ perturbed synchronous states for
each value of $c$. Shaded area marks the transition region with states
other than solely synchronous or asynchronous. The blue shaded are
marks occurrence of aperiodic dynamics. The dashed line indicates
the critical $c_{\mathrm{crit}}^{(N)}$ determined from \eqref{eq: Analyse Cluster instability}
for $a_{1}=N$ above which synchronous firing becomes unstable. }

\end{figure}

Typically rise functions in biological or physical systems are neither
purely concave nor purely convex. In particular intrinsic neuronal
dynamics are often best described with a sigmoidal rise function.
The quadratic-integrate-and-fire or exponential-integrate-and-fire
neuron \cite{ErmentroutKopell:1986,FourcaudTrocmeHanselVreeswijkBrunel:2003}
(cf. also Appendix \ref{sec:Rise-Functions}) constitute major examples.
In networks with sigmoidal rise functions a combination of the effects
inherent to concave and convex rise functions influences the network
dynamics: Synchronization of units to larger clusters due to the concave
part (cf. \cite{Mirollo:1990,KirstTimme:2009}) and stabilization
of states with asynchronously firing clusters due to the convex part
(cf. theorem \ref{thm: Analyse cluster states}). Using strictly neuronal
partial resets numerical studies show that for rise functions with
dominant concave part synchronized firing of oscillators in the asymptotic
state is typically found. In contrast, if the convex part is larger
it is more likely to find clusters of smaller sizes and the asynchronous
state. Indeed, for general rise functions $U$ we still obtain the
stability matrix $\mathbf{A}$ in \eqref{eq: Stability Matrix} but
the non-zero entries \eqref{eq: aij} can become larger than $1$
in the regime where $U$ is concave. Thus if the concave part becomes
dominant the eigenvalues are no longer bounded by $1$ and asynchronous
clusters states become unstable.

In fig. \ref{cap: Analyse desync general RF} a desynchronization
transition for the sigmoidal rise function $U_{\mathrm{QIF}}^{\mathrm{CB}}$
and linear partial reset $R_{c}$ is shown.  In the synchronous state
oscillators do not receive any intermediate sub-threshold pulses between
successive firing and the return map for an oscillator with phase
$\phi$ can be written as \[
M_{\left\{ \left\{ 1,\dots,N\right\} ,\sigma\right\} }(\phi)=U^{-1}\left(R\left(U(\phi)+(N-1)\varepsilon-1\right)\right)+1-U^{-1}\left(R\left((N-1)\varepsilon\right)\right)\]
for any partial reset $R$ and any rise function $U$. After a perturbation
the avalanche is typically triggered by a single unit and thus the
synchronous state becomes unstable if $M_{\left\{ \left\{ 1,\dots,N\right\} ,\sigma\right\} }(\phi)<\phi$
for all $\phi\in\left[1-U^{-1}\left(1-\varepsilon\right),1\right]$
which yields the condition \eqref{eq: Analyse Cluster instability}
for $a_{1}=N$. This can be used to determine the onset of a desynchronization
transition in the general case as shown in fig. \ref{cap: Analyse desync general RF}
(dashed line). The stability of smaller avalanches $a_{1}<N$ can
still be estimated with the help of theorem \ref{thm: Analyse Avalanche stability icpd dcpd}
if the rise function is dcpd but not necessarily convex. Conditions
for the sigmoidal rise functions $U_{\mathrm{QIF}}$ and $U_{\mathrm{QIF}}^{\mathrm{CB}}$
to be dcpd are given in appendix \ref{sec:Rise-Functions}.

Desynchronization due to a partial reset has three components: Translation
of phase differences into potential differences via the rise function
$U$, the relative change of potential differences due to the partial
reset $R$ after supra-threshold excitation and back-translation of
this potential difference into phase differences via $U^{-1}$ (cf.
fig. \ref{cap: Analyse Convex Synchronization-and-Desynchronization}c).
For convex rise functions the slope in the reset zone $I^{R}=\left[0,U^{-1}\left(R\left((N-1)\varepsilon\right)\right)\right]$
is always smaller than in the supra-threshold zone $I^{T}=\left[U^{-1}\left(1-(N-1)\varepsilon\right),1\right]$.
As a consequence the phase differences in $I^{T}$ are translated
via $U$ to larger potential differences and the potential differences
after reset become larger phase differences during the back translation
$U^{-1}$. This causes an effective phase desynchronization even for
partial resets that are non-expansive as depicted in fig. \ref{cap: Analyse Convex Synchronization-and-Desynchronization}c. 

For general rise functions and non-expansive partial resets the destabilization
of a cluster state due to a partial reset thus can only occur if the
slopes in $I^{T}$ are sufficiently larger than in $I^{R}$. In fact
if this ratio becomes to small the transition may not be observed
completely for non-expansive partial reset, e.g. for $R_{c}$ in the
range $c\in\left[0,1\right]$ and can be shifted to partial resets
that have to be expansive (e.g. for $c>1$) .

Finally note that, in contrast to convex rise functions, for sigmoidal
rise functions we always observe ''damped oscillations'' in the
Poincar\'e phase plots fig. \ref{cap: Analyse desync general RF}
(b,c). The amplitudes of these oscillations become larger when the
slope of the rise function at the point of inflection becomes smaller.
We therefore attribute these oscillations to sub-threshold inputs
received by oscillators near the inflection point of the rise function.

\section{\label{sec:Discussion}Discussion}

In summary, we proposed a model of pulse-coupled threshold units with
partial reset. This partial reset, an intrinsic response property
of the local units, acts as a desynchronization mechanism in the collective
network dynamics. It causes an extensive sequence of desynchronizing
bifurcations of cluster states networks of pulse-coupled oscillators
with convex rise function. This sequential desynchronization transition
is robust against structural perturbations in the coupling strength
and variations of the local subthreshold dynamics.

Previous studies have not particularly focused on the collective implications
of partial reset or similar graded resets. In network models with
pulses that are extended in time typically a full conservation of
the input is considered \cite{Tsodyks:1993,vanVreeswijk:1994,HanselMato:2001a}.
Models with instantaneous responses to inputs consider fully dissipative
reset ($R\left(\zeta\right)\equiv0$ in our model) \cite{Mirollo:1990,Gerstner:1993,Bottani:1995,SennUrbanczik:2001,TimmeWolfGeisel:2002a,TimmeWolf:2008},
fully conservative reset ($R\left(\zeta\right)=\zeta$) \cite{Bressloff:1997a,BressloffCoombes:2000}
as well as both extremes \cite{Hopfield:1995b} without discussing
particular consequences of the reset mechanism. Here we closed this
gap and showed that in fact the reset mechanism plays an important
role in synchronization processes.

Partial reset in pulse-coupled oscillators keeps the collective network
dynamics analytically tractable and at the same time describes additional,
physically or biologically relevant dynamical features of local units.
In neurons, for instance, synaptic inputs are collected in the dendrite
and then transmitted to the cell body (soma). At the soma the integration
of the membrane potential takes place and spikes are generated. Remaining
input charges on the dendrite not used to trigger a spike at the soma
may therefore contribute to the potential after somatic reset \cite{DorionOswald:2007,Rospars:1993,Bressloff:1995}. 

Such features are effectively modeled by the simple partial reset
introduced here. In particular, spike time response curves (that may
be obtained for any tonically firing neuron \cite{Perkel:1964,ReyesFetz:1993,OprisanCanavier:2002,GalanErmentroutUrban:2005})
encode the shortening of the inter-spike intervals (ISI) following
an excitatory input at different phases of the neural oscillation.
An excitatory stimulus that causes the neuron to spike will maximally
shorten the ISI in which the stimulus is applied. Additionally, the
second ISI that follows is typically affected as well, e.g. due to
compartmental effects. Exactly this shortening of the second ISI is
characterized by appropriately choosing a partial reset function in
our simplified system. The details of such a description and consequences
for networks of more complicated neuron models are studied separately
\cite{KirstTimme:2009}. For instance, networks of two-compartment
conductance based neurons indeed exhibit similar desynchronization
transitions when varying the coupling between soma and dendrite \cite{KirstTimme:2009}
which in our simplified model controls the partial reset.

The desynchronization due to the partial reset, i.e. due to local
processing of supra-threshold input, differs strongly from that induced
by previously known mechanisms based on, e.g. heterogeneity, noise,
or delayed feedback \cite{vanVreeswijk:1994,vanVreeswijk:2000,Maistrenko:2004,Kiss:2007,Popovych:2003,Denker:2004}.
Possibly, this desynchronization mechanism may also be helpful in
modified form to prevent synchronization in neural activity such as
in Parkinson tremor or in epileptic seizures \cite{Tass:2003,Tass:2006}.

In this work we developed a partial reset for supra-threshold inputs
and considered purely homogeneous and globally excitatory coupled
systems. For inhibitory couplings one can define a lower threshold
\cite{Denker:2002} below which inhibitory inputs becomes less effective,
i.e. a partial inhibition. In models of neurons, for instance, this
could characterize shunting inhibition \cite{AlgerNicoll:1979}. If
two units simultaneously receive inhibitory inputs below a lower threshold,
a zero partial inhibition, i.e. setting the state of the units to
a fixed lower value, is strongly synchronizing in analogy to a full
reset after supra-threshold excitation. Our findings suggest that
similar to a partial reset a less synchronizing non-zero partial inhibition
may also have a strong influence on the collective network dynamics.
In biologically more detailed neuronal network models both excitatory
and inhibitory couplings as well as complex network topologies play
important roles in generating irregular \cite{vanVreeswijk:1996a}
and synchronized spiking dynamics \cite{Abeles:2004}. An interesting
task would therefore be to study the impact of partial resets in such
networks.\\

This work was supported by the Federal Ministry for Education and
Research (BMBF) by a grant number 019Q0430 to the Bernstein Center
for Computational Neuroscience (BCCN) Göttingen and by a grant of
the Max Planck Society to MT.

\appendix

\section{The Eneström-Kakeya Theorem\label{sec: Appendix EnestromKakeya}}

\subsection{\label{sec: Spectral-Radius}Spectral-Radius and Matrix-Norm}

Let $\mathbf{A}=a_{ij}$ be a $n\times n$ matrix. The spectral radius
$\rho$ of a $\mathbf{A}$ is defined as \cite{Horn:1996} \begin{equation}
\rho(\mathbf{A})=\max_{\left\Vert \mathbf{x}\right\Vert =1}\left\Vert \mathbf{A}\mathbf{x}\right\Vert =\max_{i=1,..,n}\left|\lambda_{i}\right|\label{eq: specral radius}\end{equation}
 where $\left\Vert \right\Vert $ denotes a norm and $\{\lambda_{i}\}_{i=1}^{n}$
are the complex eigenvalues of $\mathbf{A}$.If $\left\Vert -\right\Vert $
is any matrix norm (see \cite{Mehta:1989}) the inequality \begin{equation}
\rho(A)\le\left\Vert A\right\Vert \label{eq: Appendix spectral radius < matrix norm}\end{equation}
is valid and in fact $\rho(A)=\inf\left\Vert A\right\Vert $ where
the infimum is taken over all matrix norms \cite{Horn:1996}. Here
we only need the \emph{maximum-absolute-column-sum norm} of $\mathbf{A}$
defined as\begin{equation}
\left\Vert \mathbf{A}\right\Vert =\max_{j=1,...,n}\sum_{i=1}^{n}\left|a_{ij}\right|\label{eq: Appendix max column matrix norm}\end{equation}

\subsection{\label{sub: Appendix Companion-Matrices}Companion Matrices}

A $(n+1)\times(n+1)$ companion matrix $\mathbf{C}$ has the standard
form \begin{equation}
\mathbf{C}=\left(\begin{array}{cccc}
0 & \dots & 0 & -\tilde{c}_{0}\\
1 &  & 0 & -\tilde{c}_{1}\\
 & \ddots &  & \vdots\\
0 &  & 1 & -\tilde{c}_{n}\end{array}\right)\label{eq: Appendix companion matrix}\end{equation}
 with characteristic polynomial\begin{equation}
\tilde{p}_{n+1}(z)=\det\left(z-\mathbf{C}\right)=\tilde{c}_{0}+\tilde{c}_{1}z+...+\tilde{c}_{n}z^{n}+z^{n+1}\label{eq: Appendix charactersitic polynomial}\end{equation}

\subsection{\label{sec: Appendix The-Enestrm-Kakeya-Theorem}The Eneström-Kakeya
Theorem}

The Eneström-Kakeya theorem%
\footnote{In 1893 the Swedish actuary and mathematics historian Gustaf Eneström
published this result of roots of certain polynomials with real coefficients
in a paper on pension insurance (in Swedish) \cite{Enestroem:1893}.
This result is now often called the Eneström-Kakeya theorem, since
S. Kakeya published a similar result in 1912-1913 \cite{Kakeya:1912}.
But Kakeya's theorem contained a mistake, which was corrected by A.
Hurwitz in 1913 \cite{Hurwitz:1933}.%
} \cite{Enestroem:1893,Kakeya:1912,Hurwitz:1933,AndersonVarga:1972,Horn:1996}
can be stated in the following form 
\begin{thm}
Let $p_{n}(z)=\sum_{j=0}^{n}c_{j}z^{j}$ with $c_{j}>0$ then for
all $\lambda$ with $p_{n}\left(\lambda\right)=0$ \[
\left|\lambda\right|\le\max_{0\le i<n}\left\{ \frac{c_{i}}{c_{i+1}}\right\} =:\beta\]
 \begin{proof}
Note first that $\beta>0$. We set\begin{equation}
\tilde{p}_{n+1}(z):=\frac{(z-1)p_{n}(\beta z)}{c_{n}\beta^{n}}=z^{n+1}+\sum_{i=0}^{n}\tilde{c}_{i}z^{i}\label{eq: Appendix ernestoem polynomial}\end{equation}
 where \[
\tilde{c}_{i}=\begin{cases}
\frac{c_{i-1}-\beta c_{i}}{c_{n}\beta^{n-i+1}} & \quad\quad1\le i\le n\\
\frac{-c_{0}}{c_{n}\beta^{n}} & \quad\quad i=0\end{cases}\]
Using the definition of $\beta$ one observes that $\tilde{c}_{j}\le0$.
Comparing \eqref{eq: Appendix charactersitic polynomial} with \eqref{eq: Appendix ernestoem polynomial}
the companion matrix of $\tilde{p}_{n+1}$ is given by \eqref{eq: Appendix companion matrix}.
Since $1+\sum_{j=1}^{n+1}\tilde{c}_{j}=\tilde{p}_{n+1}(1)=0$ if follows
that $\left\Vert \mathbf{C}\right\Vert =\sum_{j=1}^{n+1}\left|\tilde{c}_{j}\right|=-\sum_{j=1}^{n+1}\tilde{c}_{j}=1$
when using the maximum-absolute-column-sum norm \eqref{eq: Appendix max column matrix norm}
and hence from \eqref{eq: Appendix spectral radius < matrix norm}\[
\rho(\mathbf{C})\le1\]
Thus for all $\tilde{\lambda}$ with $p_{n+1}\left(\tilde{\lambda}\right)=0$
we have $\left|\tilde{\lambda}\right|\le\rho(\mathbf{C})\le1$. For
a $\lambda$ with $p_{n}\left(\lambda\right)=0$ it follows from the
definition of $\tilde{p}_{n+1}$ that $\tilde{p}_{n+1}\left(\tilde{\lambda}\right)=0$
for $\tilde{\lambda}=\frac{\lambda}{\beta}$ and thus $\left|\lambda\right|\le\beta$.\end{proof}
\begin{cor}
Let $\mathbf{A}$ be a matrix of the form (cf. \eqref{eq: Stability Matrix}) 

\[
\mathbf{A}=\left(\begin{array}{ccccc}
-a_{n} & a_{1} & 0 & \dots & 0\\
-a_{n} & 0 & a_{2} & \ddots & \vdots\\
\vdots & \vdots & \ddots & \ddots & 0\\
-a_{n} & 0 & \dots & 0 & a_{n-1}\\
-a_{n} & 0 & \dots & 0 & 0\end{array}\right)\]
with $a_{i}>0$ then \[
\rho(\mathbf{A})\le\max\left\{ a_{i}\right\} _{i=1}^{n}\]
\end{cor}
\begin{proof}
By a permutation of rows and columns we can cast $\mathbf{A}$ into
a matrix $\mathbf{B}=b_{i,j}$ with non-zero entries $b_{i,(i+1)}=a_{i}$,
$i\in\left\{ 1,\dots,n-1\right\} $ and $b_{i,n}=-a_{n}$, $i\in\left\{ 1,\dots,n\right\} $.
This does not change the spectral radius. The similarity transformation
to $\mathbf{C}=\mathbf{Q}^{-1}\mathbf{B}\mathbf{Q}$ with $\mathbf{Q}=\mathrm{diag}\left(q_{1},\dots,q_{N-1}\right)$
and $q_{1}=1$, $q_{i}=\prod_{j=1}^{i-1}a_{j}$, $i\in\left\{ 2,\dots,n\right\} $
also preserves the spectral radius and $\mathbf{C}$ has the form
of a companion matrix \eqref{eq: Appendix companion matrix} with
$c_{i}=\prod_{j=i+1}^{n}a_{i}>0$, $i\in\left\{ 0,\dots,n-1\right\} $.
Thus $\rho(\mathbf{A})=\rho(\mathbf{C})\le\max_{0\le i<n}\left\{ \frac{c_{i}}{c_{i+1}}\right\} =\max_{1\le i\le n}\left\{ a_{i}\right\} $ 
\end{proof}

\section{\label{sec:Rise-Functions}Rise Functions}

\end{thm}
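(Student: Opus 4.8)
The plan is to reduce $\mathbf{A}$ by spectrum-preserving similarities to a companion matrix and then invoke the Enestr\"om--Kakeya theorem proved just above. The point is that $\mathbf{A}$ is already only one similarity away from companion form: its only nonzero entries are a single full column (the first, with entries $-a_n$) and a superdiagonal of positive entries $a_1,\dots,a_{n-1}$. First I would apply a permutation similarity $P^{-1}\mathbf{A}P$, i.e. a simultaneous permutation of rows and columns, which leaves the eigenvalues and hence $\rho$ unchanged. Reversing the coordinate order through $\sigma(i)=n+1-i$ relocates the full column from the first to the last position and converts the superdiagonal into a subdiagonal, yielding a matrix $\mathbf{B}$ whose only nonzero entries are $b_{i,n}=-a_n$ in the last column and $b_{i,i-1}=a_{n+1-i}>0$ on the subdiagonal.

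Next I would strip off the subdiagonal factors by a diagonal similarity $\mathbf{C}=\mathbf{Q}^{-1}\mathbf{B}\mathbf{Q}$ with $\mathbf{Q}=\mathrm{diag}(q_1,\dots,q_n)$, choosing $q_1=1$ and $q_i=a_{n+1-i}\,q_{i-1}$ so that every subdiagonal entry of $\mathbf{C}$ becomes $1$. The last column is then rescaled to $C_{i,n}=-a_n q_n/q_i$, so $\mathbf{C}$ is exactly a matrix in the standard companion form \eqref{eq: Appendix companion matrix}. Because each $a_i>0$, all the $q_i$ are positive, hence all coefficients $c_j$ of the characteristic polynomial $p_n(z)=\det(zI-\mathbf{C})=\sum_{j=0}^{n}c_j z^j$ are strictly positive, with $c_n=1$. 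This positivity is precisely the hypothesis required to apply the preceding theorem.

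Finally I would apply Enestr\"om--Kakeya to $p_n$, obtaining $\rho(\mathbf{C})\le\max_{0\le i<n}\{c_i/c_{i+1}\}$, and then verify that the consecutive ratios telescope. Since the scaling gives $c_i=a_n q_n/q_{i+1}$, the ratio $c_i/c_{i+1}$ collapses to the single factor $q_{i+2}/q_{i+1}=a_{n-1-i}$ for $i<n-1$ and to $c_{n-1}/c_n=a_n$ for $i=n-1$, so the ratios run exactly through $\{a_1,\dots,a_n\}$ and their maximum is $\max_i a_i$. As $\mathbf{A}$, $\mathbf{B}$ and $\mathbf{C}$ are all similar, $\rho(\mathbf{A})=\rho(\mathbf{C})\le\max_i a_i$, which is the claim.

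The step I expect to be the main obstacle is the bookkeeping that makes the two similarities land on an \emph{exact} companion matrix, together with the verification that the Enestr\"om--Kakeya ratios $c_i/c_{i+1}$ simplify to the individual $a_i$ rather than to products of them; it is this telescoping, forced by the specific choice $q_i/q_{i-1}=a_{n+1-i}$ of the diagonal scaling, that yields the sharp bound $\max_i a_i$ and not a weaker one. As an independent cross-check one can instead compute the characteristic polynomial of $\mathbf{A}$ directly by induction on $n$, obtaining $p_n(z)=z^n+a_n z^{n-1}+a_1 a_n z^{n-2}+\dots+a_1\cdots a_{n-1}a_n$ with all coefficients positive; the identical ratio computation then reconfirms $\rho(\mathbf{A})\le\max_i a_i$.
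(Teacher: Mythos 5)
Your proof is correct and follows essentially the same route as the paper: a permutation similarity followed by a diagonal similarity brings $\mathbf{A}$ to companion form with positive characteristic-polynomial coefficients, and the Enestr\"om--Kakeya bound then telescopes to $\max_{i} a_{i}$. The only differences are cosmetic (you use the reversal permutation where the paper uses a cyclic one, with a correspondingly different diagonal scaling), and your explicit characteristic polynomial $z^{n}+a_{n}z^{n-1}+a_{1}a_{n}z^{n-2}+\dots+a_{1}\cdots a_{n-1}a_{n}$ is in fact the correct one, whereas the paper's stated coefficients $c_{i}=\prod_{j=i+1}^{n}a_{j}$ contain a harmless indexing slip --- the multiset of consecutive ratios $c_{i}/c_{i+1}$, and hence the bound, is the same.
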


\subsection{\label{sub: IF Rise-Functions}Rise Functions for Integrate-and-Fire
Models}

In this section we derive the rise functions for single variable models
of the form \[
\frac{d}{dt}v=F\left(v\right)+I_{\mathrm{in}}(t)\]
We distinguish between potential independent inputs $I_{\mathrm{in}}(t)=P(t)$
with $P(t)=\sum_{s}\varepsilon_{s}\delta\left(t-t_{s}\right)$ and
the conductance based approach $I_{\mathrm{in}}\left(t\right)=g_{\mathrm{syn}}P(t)\left(E_{\mathrm{syn}}-v(t)\right)$,
$E_{\mathrm{syn}}>1$. More generally, if $I_{\mathrm{in}}(t)=Q(v(t))P(t)$
and $Q(t)>0$ the transformation \begin{equation}
u(t)=\frac{1}{M}\int_{0}^{v(t)}\frac{1}{Q(v)}\mathrm{d}v\quad\quad M=\int_{0}^{1}\frac{1}{Q(v)}\mathrm{d}v\label{eq: Model transform A away}\end{equation}
yields\begin{equation}
\frac{d}{dt}u=\hat{F}\left(u\right)+\frac{1}{M}P(t)\quad\quad\hat{F}\left(u\right)=\frac{1}{M}\frac{F\left(v\left(u\right)\right)}{Q(v(u))}\label{eq: Model transform A away  tilde_F}\end{equation}
i.e. a potential independent input $I_{\mathrm{in}}$. Thus if the
rise function $U$ for $I_{\mathrm{in}}(t)=P(t)$ is known the conductance
based rise function $U^{\mathrm{CB}}$ is calculated with the help
of \eqref{eq: Model Oscillators Transform} as\begin{equation}
U^{\mathrm{CB}}\left(\phi\right)=\frac{\ln\left(1-E_{\mathrm{syn}}^{-1}U\left(\phi\right)\right)}{\ln\left(1-E_{\mathrm{syn}}^{-1}\right)}\label{eq: Model U_CB}\end{equation}

The leaky-integrate-and-fire (LIF) model \cite{Lapicque:1907} is
given by $F(u)=-g_{l}u+I_{\mathrm{ext}}$ which yields\begin{equation}
U_{\mathrm{LIF}}\left(\phi\right)=E_{\mathrm{eq}}\left(1-e^{-g_{\mathrm{l}}T_{\mathrm{LIF}}\phi}\right)\label{eq: Model U leaky IF}\end{equation}
where $T_{\mathrm{LIF}}=-\frac{1}{g_{\mathrm{l}}}\ln\left(1-E_{\mathrm{eq}}\right)$
and $E_{\mathrm{eq}}=\frac{I_{\mathrm{ext}}}{g_{l}}+E_{\mathrm{l}}>1$.
This yields\begin{equation}
U_{\mathrm{LIF}}^{\mathrm{CB}}\left(\phi\right)=\frac{\ln\left(1-E_{\mathrm{syn}}^{-1}U_{\mathrm{LIF}}\left(\phi\right)\right)}{\ln\left(1-E_{\mathrm{syn}}^{-1}\right)}\label{eq: Model U CB LIF}\end{equation}

For the quadratic-integrate-and-fire (QIF) model \cite{FourcaudTrocmeHanselVreeswijkBrunel:2003}
with $F(u)=g_{2}\left(E_{\mathrm{r}}-u\right)\left(E_{\mathrm{t}}-u\right)+I_{\mathrm{ext}}$
one obtains for $I_{\mathrm{syn}}(t)=P(t)$ \begin{equation}
U_{\mathrm{QIF}}\left(\phi\right)=\frac{\alpha-\tan\left(\arctan\left(\alpha\right)-\phi\left(\arctan\left(\alpha\right)-\arctan\left(\beta\right)\right)\right)}{\alpha-\beta}\label{eq: Model U QIF def}\end{equation}
where $\alpha=\frac{E_{r}+E_{t}}{\gamma}\text{ , }\quad\beta=\alpha-\frac{2}{\gamma},\text{ \ensuremath{\quad}}\gamma=\sqrt{\frac{4I_{\mathrm{ext}}}{g_{2}}-\left(E_{\mathrm{t}}-E_{\mathrm{r}}\right)^{2}}>0$.
Hence \begin{equation}
U_{\mathrm{QIF}}^{\mathrm{CB}}\left(\phi\right)=\frac{\ln\left(1-E_{\mathrm{syn}}^{-1}U_{\mathrm{QIF}}\left(\phi\right)\right)}{\ln\left(1-E_{\mathrm{syn}}^{-1}\right)}\label{eq: Model U CB QIF}\end{equation}

Note that depending on the IF model and coupling type convex, concave
and sigmoidal shapes are possible (cf. tab \ref{cap: Analyse Properties-of-rise-functions}).
We remark that as $E_{\mathrm{syn}}\rightarrow\infty$ we recover
the potential independent model from the conductance based version,
i.e. $U^{\mathrm{CB}}\rightarrow U$ and the conditions for the different
properties of $U^{\mathrm{CB}}$ become the conditions for $U$ in
tab. \ref{cap: Analyse Properties-of-rise-functions}.

\subsection{\label{sub:Icpd-and-Dcpd}Icpd and Dcpd Rise Functions}

\begin{table}[th]
\begin{centering}
\begin{tabular}{|c||>{\centering}m{2.5cm}|c|m{2.1cm}|m{2cm}|c|m{3.5cm}|}
\hline 
$U$ & parameter domain & concave & \centering{}convex  & \centering{}sigmoidal & icpd & \centering{}dcpd\tabularnewline
\hline
\hline 
$U_{\mathrm{LIF}}$ & \centering{}$E_{\mathrm{eq}}>1$ & $\surd$ & \centering{}- & \centering{}- & $\surd$ & \centering{}-\tabularnewline
\hline 
$U_{\mathrm{LIF}}^{\mathrm{CB}}$ & $E_{\mathrm{syn}}>1$, $E_{\mathrm{eq}}>1$ & $E_{\mathrm{syn}}>E_{\mathrm{eq}}$ & \centering{}$E_{\mathrm{syn}}<E_{\mathrm{eq}}$ & \centering{}- & $E_{\mathrm{syn}}\ge E_{\mathrm{eq}}$ & \centering{}$E_{\mathrm{syn}}\le E_{\mathrm{eq}}$\tabularnewline
\hline 
$U_{\mathrm{QIF}}$ & $0\le\alpha<\infty$, $-\infty<\beta\le0$,

$\alpha>\beta$ & $\beta=0$ & \centering{}$\alpha=0$ & \centering{}$\beta<0<\alpha$ & - & \begin{centering}
$\alpha\le1$
\par\end{centering}

\centering{}$-1\le\beta$ \tabularnewline
\hline 
$U_{\mathrm{QIF}}^{\mathrm{CB}}$ & $E_{\mathrm{syn}}>1$, $0\le\alpha<\infty$, $-\infty<\beta\le0$  & - & \begin{centering}
$0\le1+$
\par\end{centering}

\centering{}$\alpha\left(\alpha-2\eta\right)$ & \begin{centering}
$0>1+$
\par\end{centering}

\centering{}$\alpha\left(\alpha-2\eta\right)$ & \centering{}- & \centering{}$\alpha^{2}\le\frac{\eta}{\eta-\alpha-\alpha^{-1}}$ $\beta^{2}\le\frac{\eta-\alpha+\beta}{\eta-\alpha-\beta^{-1}}$ \tabularnewline
\hline 
$U_{b}$ & $b\in\mathbb{R}\setminus\left\{ 0\right\} $ & $b<0$ & \centering{}$b>0$ & \centering{}- & $\surd$ & \centering{}$\surd$\tabularnewline
\hline
\end{tabular}\vspace{3mm}\\

\par\end{centering}

\caption{\label{cap: Analyse Properties-of-rise-functions}Properties of different
rise functions. $\eta=E_{\mathrm{syn}}\left(\alpha-\beta\right)$.
}

\end{table}
Usually it is difficult to verify the icpd or dcpd property \eqref{def:icpd dcpd}
of a rise function. Here we show that it is closely related to the
third derivative of $U$. 

We first note that $\Delta H$ obeys the relations $\Delta H\left(\phi,0,\varepsilon\right)\equiv0$
and $\Delta H\left(\phi,\Delta\phi,0\right)\equiv\Delta\phi$ and
hence $\frac{\partial}{\partial\phi}\Delta H\left(\phi,\Delta\phi,0\right)=0$
and \[
\frac{\partial}{\partial\phi}\Delta H\left(\phi,\Delta\phi,\varepsilon\right)=\int_{0}^{\varepsilon}\int_{0}^{\Delta\phi}\frac{\partial}{\partial\phi}\frac{\partial}{\partial\varepsilon}\frac{\partial}{\partial\Delta\phi}\Delta H\left(\phi,\tilde{\Delta\phi},\tilde{\varepsilon}\right)\mathrm{d\tilde{\Delta\phi}\mathrm{d}\tilde{\varepsilon}}\]
Thus $U$ is icpd if\begin{equation}
\frac{\partial^{3}}{\partial\phi\partial\varepsilon\partial\Delta\phi}\Delta H\left(\phi,\Delta\phi,\varepsilon\right)\ge0\quad\text{for all}\,\left(\phi,\Delta\phi,\varepsilon\right)\in\mathcal{D}\label{eq: Analyse partial deriv 3 Delta H}\end{equation}
Using $\le$ instead of $\ge$ yields an analogous condition for dcpd
$U$. By definition of $\Delta H$ eq. \eqref{eq: Analyse partial deriv 3 Delta H}
yields the condition\begin{eqnarray*}
\frac{\partial^{3}}{\partial\phi\partial\varepsilon\partial\Delta\phi}\Delta H\left(\phi,\Delta\phi,\varepsilon\right) & = & 3\frac{U''\left(H\left(\phi+\Delta\phi,\varepsilon\right)\right)^{2}U'\left(\phi+\Delta\phi\right)^{2}}{U'\left(H\left(\phi+\Delta\phi,\varepsilon\right)\right)^{5}}\\
 &  & -\frac{U''\left(\phi+\Delta\phi\right)U''\left(H\left(\phi+\Delta\phi,\varepsilon\right)\right)}{U'\left(H\left(\phi+\Delta\phi,\varepsilon\right)\right)^{3}}-\frac{U'\left(\phi+\Delta\phi\right)^{2}U'''\left(H\left(\phi+\Delta\phi,\varepsilon\right)\right)}{U'\left(H\left(\phi+\Delta\phi,\varepsilon\right)\right)^{4}}\\
 & \ge & 0\quad\forall\,\left(\phi,\Delta\phi,\varepsilon\right)\in\mathcal{D}\,.\end{eqnarray*}
Substituting $H\left(\phi+\Delta\phi,\varepsilon\right)\rightarrow\phi$
and $\phi+\Delta\phi\rightarrow\psi$ one obtains \begin{equation}
U'''\left(\phi\right)\le3\frac{U''\left(\phi\right)^{2}}{U'\left(\phi\right)}-\frac{U''\left(\psi\right)U''\left(\phi\right)U'\left(\phi\right)}{U'\left(\psi\right)^{2}}\quad\forall\,0\le\psi\le\phi\le1\label{eq: Analyse U'''  non-local condition}\end{equation}
as a non-local sufficient condition for a rise function to be icpd.
The condition for dcpd $U$ is given when replacing $\le$ by $\ge$.

Now note that if \eqref{eq: Analyse U'''  non-local condition} is
satisfied locally for $\phi=\psi$ the sign of the derivative \[
\frac{\partial}{\partial\psi}\left(3\frac{U''\left(\phi\right)^{2}}{U'\left(\phi\right)}-\frac{U''\left(\psi\right)U''\left(\phi\right)U'\left(\phi\right)}{U'\left(\psi\right)^{2}}\right)=U''\left(\phi\right)U'\left(\phi\right)\left(2\frac{U''\left(\psi\right)^{2}}{U'\left(\psi\right)^{3}}-\frac{U'''\left(\psi\right)}{U'\left(\psi\right)^{2}}\right)\]
is determined by $U''(\phi)$ since the term in brackets on the right
hand side at $\phi=\psi$ is positive using inequality \eqref{eq: Analyse U'''  non-local condition}
and $U'>0$. Hence, if $U$ is concave, a sufficient local condition
for a rise function to be icpd is \[
U''\left(\phi\right)\le0\quad\text{and}\quad U'''\left(\phi\right)\le2\frac{U''\left(\phi\right)^{2}}{U'\left(\phi\right)}\quad\forall\,0\le\phi\le1\]
Conversely a local condition for a convex rise functions to be dcpd
is given by 

\[
U''\left(\phi\right)\ge0\quad\text{and}\quad U'''\left(\phi\right)\ge2\frac{U''\left(\phi\right)^{2}}{U'\left(\phi\right)}\quad\forall\,0\le\phi\le1\:.\]

Different properties of commonly used rise functions are summarized
in Table \ref{cap: Analyse Properties-of-rise-functions}.

\bibliographystyle{abbrv}
\bibliography{arXiv_submit_12092008/arxiv_12092008}

\end{document}